\newtheorem{proposition}{Proposition}
\newtheorem{example}[proposition]{Example}
\newtheorem{lemma}[proposition]{Lemma}
\newtheorem{theorem}[proposition]{Theorem}
\newcommand{\npmatrix}[1]{\left( \begin{matrix} #1 \end{matrix} \right)} 
\newcommand{\R}{\mathbb{R}}
\newcommand{\mc}{\mathcal}
\journal{Linear Algebra Appl.}
\begin{document}

\begin{frontmatter}



\title{Completely positive factorizations associated with Euclidean distance matrices corresponding to an arithmetic progression}


\author[lab1,lab2]{Damjana Kokol Bukov\v{s}ek\fnref{footnote}}
\fntext[footnoste]{Damjana Kokol Bukov\v{s}ek acknowledges financial support from the Slovenian Research Agency (research core funding No. P1-0222). The work on the paper was done during her three-month visit to University College Dublin.}
\author[lab3]{Thomas Laffey}
\author[lab3]{Helena \v{S}migoc}
\address[lab1]{School of Economics and Business, University of Ljubljana, Slovenia}
\address[lab2]{Institute of Mathematics Physics and Mechanics, Ljubljana, Slovenia}
\address[lab3]{School of Mathematics and Statistics, University College Dublin, Ireland}

\begin{abstract}
Euclidean distance matrices corresponding to an arithmetic progression have rich spectral and structural properties. We exploit those properties to develop completely positive factorizations of translations of those matrices. We show that the minimal translation that makes such a matrix positive semidefinite results in a completely positive matrix. We also discuss completely positive factorizations of such matrices over the integers. Methods developed in the paper can be used to find completely positive factorizations of other matrices with similar properties. 
\end{abstract}



\begin{keyword}
completely positive matrices \sep Euclidean distance matrices 
\MSC  15B48 \sep 15A23 \sep 15B36
\end{keyword}

\end{frontmatter}

\section{ Introduction }

An $n\times n$ real symmetric matrix $A$ is \emph{completely positive}, if it can be written as $A=BB^T$ for some $n \times k $ entry-wise non-negative matrix $B$. The minimum number of columns $k$ in such a non-negative factor $B$ is called the \emph{cp-rank} of $A$. The set of completely positive matrices forms a closed, convex cone, and better understanding of this cone is not only an interesting question, but it also plays an important role in the field of \emph{copositive optimization}, \cite{MR1810401, 10.1007/978-3-642-12598-0_1, MR2892529}. 

It is immediate from the definition, that any completely positive matrix is non-negative and positive semidefinite. A matrix that is both non-negative and positive semidefinite is said to be \emph{doubly non-negative}. Not every $n \times n$ doubly non-negative matrix of order $n \geq 5$ is completely positive, and deciding if a given matrix is completely positive is a hard problem that has been a topic of intense research over the years. For a comprehensive survey on completely matrices we refer to \cite{MR1986666}. 

Research has shown that the question can be solved for different classes of matrices. Here we mention two representative results that are different in flavour. In \cite{MR910984} it is shown that every diagonally dominant non-negative matrix is completely positive. Kogan and Berman \cite{MR1217760} introduced a graph theoretic view on the problem. For a given $n\times n$ symmetric matrix $A$, let $G(A)$ be a graph on $n$ vertices with an edge $(i,j)$ if and only if $a_{ij} \neq 0$.  A graph $G$ is said to be \emph{completely positive} if every doubly non-negative matrix with $G(A)=G$ is completely positive. In \cite{MR1217760} it is shown that a graph $G$ is completely positive if and only if it does not contain an odd cycle of length greater than $4$.

Finding a factorization of a given completely positive matrix is a natural question in this context. A factorization algorithm that can factorize any matrix in the interior of the cone of completely positive matrices is offered in \cite{MR3152068}, explicit factorization of diagonally dominant matrices is presented in \cite{MR910984}, and generalized to matrices whose comparison matrix is positive definite in \cite{MR1310974}. Factorization of matrices with conditions on their graph is studied for example in \cite{MR923678,MR3023447}. Those factorizations are typically not optimal, the number of columns in the factorization matrix is larger than the minimal possible. 
Finding and optimizing the cp-rank adds additional complexity to an already hard problem. 

Once we know that a factorization of a given matrix $A$ exists, we may want to impose further properties on the factors. A matrix $A$ has a rational cp-factorization, if it can be decomposed as $BB^T$ where $B$ the entries of $B$ are non-negative and rational. Integer cp-factorization is defined in a similar way.  Every rational matrix which lies in the interior of the cone of completely positive matrices has a rational cp-factorization \cite{MR3624664}. Integer question seems to be harder to study, and has only recently been answered for $2 \times 2$ matrices \cite{MR3904097}. For a detailed state-of-the art account on those two questions we refer the reader to \cite{MR3859579}.

In this paper we study completely positive factorizations of translations of Euclidean distance matrices corresponding to an arithmetic progression. Distance matrices have rich structural and spectral properties, and have been extensively studied in the literature \cite{MR3887551,MR787367, MR3152147}. Our work was particularly motivated by results on factorizations of distance matrices \cite{MR2964717, MR2653832, MR3918551}. In this paper we develop further factorizations of translations of distance matrices  corresponding to an arithmetic progression, that depend on specific properties of this class. 
 Methods, that we introduce, can be adapted to other matrices with similar properties. While Euclidean distance matrices are well behaved in many ways, they are not positive definite. The first factorization that  we develop has a factor with one negative element, and subsequent factorizations are developed for translations of  Euclidean distance matrices. In Section \ref{sec:optimal} we find the best possible result for this question. Finally, in Section \ref{sec:integer} we present some factorizations, where the factors have integer entries. 

We will use standard notation in the paper. The $n \times n$ identity matrix will be denoted by $I_n$, and the $n \times n$ nilpotent Jordan block by $J_n$. $B(i)$ will denote the matrix obtained from $B$ by deleting the $i$-th row and column. More generally, for $\mathcal I \subseteq \{1,2,\ldots,n\}$, we will denote by $B(\mathcal I)$ the matrix obtained from $B$ by deleting the row and columns indexed by $\mathcal I$. 

\section{Euclidean Distance Matrices corresponding an arithmetic progression}

For a set of distinct real numbers $\mathcal S = \{a_1, a_2, ..., a_n\}$, we  define {\it the Euclidean distance matrix} to be the matrix $\mathrm{EDM}(\mathcal S)=(a_{ij})_{i,j=1}^n$ with $a_{ij} = (a_j - a_i)^2$. Such a matrix $\mathrm{EDM}(\mathcal S)$ is clearly entrywise non-negative, and it has rank $3$, see \cite{MR2563025}.

In this work we are interested in a family of Euclidean distance matrices that correspond to an arithmetic progression, i.e. $\mathcal S = \{a, a+d, ..., a+(n-1) d\}$ for some positive real numbers $a$ and $d$, and in particular for  $\mathcal S = \{1, 2, ..., n\}$. We will denote $$A_n := \mathrm{EDM}(\{1, 2, ..., n\}),$$ hence $(A_n)_{ij} = (j-i)^2.$
We can limit our discussion to this special case, as for $\mathcal S = \{a, a+d, ..., a+(n-1) d\}$ we have $EDM(\mc S) = d^2 A_n $, and all the properties of $A_n$ are easily adapted to this more general case. In our first result we explicitly compute the eigenvalues and the eigenvectors of $A_n$. 

\begin{lemma}\label{lem:eigenvalues}
The nonzero eigenvalues of $A_n$ are equal to:
$$ \lambda_{1,2} = \textstyle\frac{1}{12}n(n^2-1) \pm \sqrt{\textstyle\frac{1}{240}n^2(n^2-1)(3n^2-7)}, 
\lambda_3 = -\textstyle\frac{1}{6}n(n^2-1),$$
$\lambda_1 > 0 > \lambda_2 > \lambda_3$. The eigenvector corresponding to the eigenvalue $\lambda_3$ is equal to 
$ w = (w_i)_{i=1}^n$ with $w_i:=n+1-2i$ for $i=1,\ldots, n$. For $j=1,\ldots, n-3$ let $v(j)$ denote the vector with only four nonzero elements, defined as follows: $v(j)_j=1$, $v(j)_{j+1}=-3$, $v(j)_{j+2}=3$, $v(j)_{j+3}=-1$. Then the vectors $v(j)$, $j=1,\ldots,n-3$, form a basis of the null space of $A_n$.  
\end{lemma}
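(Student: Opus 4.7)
The plan is to treat the null space and the three nonzero eigenvalues separately, using two different tools.

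For the null space, I start from the cited fact that $\mathrm{rank}(A_n) = 3$, so $\dim \ker A_n = n - 3$, which matches the number of vectors $v(j)$. I then verify $v(j) \in \ker A_n$ directly: the $i$-th entry of $A_n v(j)$ equals
\[
(j-i)^2 - 3(j+1-i)^2 + 3(j+2-i)^2 - (j+3-i)^2,
\]
which, viewed as a function of $i$, is (up to a sign) the third forward difference of the quadratic polynomial $x \mapsto x^2$ and so vanishes identically. Linear independence of $v(1), \dots, v(n-3)$ is immediate from the staircase pattern: $v(j)$ has its first nonzero entry in position $j$, so the family is already in echelon form.

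For the nonzero spectrum, the pivotal observation is the rank-three decomposition
\[
A_n \;=\; e_s\, e^T + e\, e_s^T - 2\, e_l\, e_l^T,
\]
with $e = (1,\dots,1)^T$, $e_l = (1,2,\dots,n)^T$ and $e_s = (1,4,\dots,n^2)^T$, immediate from $i^2 + j^2 - 2ij = (j-i)^2$. This confines every eigenvector with nonzero eigenvalue to $\mathrm{span}(e, e_l, e_s)$. Now $w = (n+1)\,e - 2\,e_l$, and applying the decomposition together with the standard power sums for $\sum j$, $\sum j^2$, $\sum j^3$, one checks that the $e_s$-coefficient of $A_n w$ vanishes (via $n(n+1) - 2\cdot\tfrac{n(n+1)}{2} = 0$), while the $e$- and $e_l$-coefficients combine to give $A_n w = -\tfrac{n(n^2-1)}{6}\, w$. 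This identifies $w$ as the eigenvector for $\lambda_3$.

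To recover $\lambda_1$ and $\lambda_2$ I use two trace identities. From $\mathrm{tr}(A_n) = 0$ we get $\lambda_1 + \lambda_2 = -\lambda_3$. For the second moment,
\[
\lambda_1^2 + \lambda_2^2 + \lambda_3^2 \;=\; \mathrm{tr}(A_n^2) \;=\; \sum_{i,j=1}^n (j-i)^4 \;=\; 2\sum_{d=1}^{n-1}(n-d)\,d^4,
\]
which after substituting Faulhaber's closed forms for $\sum d^4$ and $\sum d^5$ and factoring collapses to $\tfrac{n^2(n^2-1)(2n^2-3)}{30}$. Combining with $\lambda_3^2 = \tfrac{n^2(n^2-1)^2}{36}$ and the identity $(\lambda_1-\lambda_2)^2 = 2(\lambda_1^2+\lambda_2^2) - (\lambda_1+\lambda_2)^2$ yields $(\lambda_1-\lambda_2)^2 = \tfrac{n^2(n^2-1)(3n^2-7)}{60}$, and the stated formula for $\lambda_{1,2}$ follows. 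The sign pattern $\lambda_1 > 0 > \lambda_2 > \lambda_3$ then reduces to two elementary inequalities in $n$: $(\lambda_1-\lambda_2)^2 > (\lambda_1+\lambda_2)^2$ (equivalent to $\lambda_1\lambda_2 < 0$, and which simplifies to $4n^2 > 16$) and $|\lambda_3| > |\lambda_2|$, both valid for $n \ge 3$.

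The principal obstacle I expect is the bookkeeping in the second trace: substituting the Faulhaber formulas for $\sum d^4$ and $\sum d^5$ and then telescoping the polynomial expression down to the neatly factored $\tfrac{n^2(n^2-1)(2n^2-3)}{30}$ requires some patient algebra. Every other ingredient, be it the rank-one decomposition, the finite-difference cancellation, or the final sign comparisons, is a short direct verification.
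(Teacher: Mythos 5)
Your proof is correct and follows essentially the same plan as the paper's: verify $w$ and the $v(j)$ directly, use $\operatorname{tr}(A_n)=0$ together with $\operatorname{tr}(A_n^2)$ to pin down $\lambda_1,\lambda_2$, then check the sign pattern. The paper verifies $A_n w=\lambda_3 w$ by directly evaluating the sum $\sum_j (j-i)^2(n+1-2j)$ and obtains the quadratic for $\lambda_{1,2}$ by eliminating $\lambda_3$, whereas you package the same computations more conceptually through the rank-three outer-product decomposition $A_n = e\,e_s^T + e_s\,e^T - 2\,e_l\,e_l^T$ and the finite-difference identity for $v(j)$ — pleasant refinements, but the underlying argument is the same.
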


\begin{proof}
We can prove that $w$ is an eigenvector for $A_n$ corresponding to $\lambda_3$ by direct computation:
$$ (A_nw)_i = \sum_{j=1}^n (j-i)^2(n+1-2j) = -\textstyle\frac{1}{6}n(n^2-1)(n+1-2i).$$
It is also straightforward to check that $A_nv(j)=0$, for $j=1,\ldots,n-3$. 
Since rank of $A_n$ is $3$, we still need to compute two more nonzero eigenvalues. 


To compute the remaining two eigenvalues, we note that the trace of $A_n$ is equal to zero, i.e. $\lambda_1+\lambda_2+\lambda_3=0$, 
and we compute the trace of $A_n^2$: 
$$ \lambda_1^2 + \lambda_2^2 + \lambda_3^2 = \sum_{i=1}^n \sum_{j=1}^n (j-i)^4 = \textstyle\frac{1}{30}n^2(n^2-1)(2n^2-3).$$
Taking into account that $\lambda_3$ is known, we obtain the following quadratic equation for $\lambda_1$ and $\lambda_2$:
$$ 180 \lambda^2 - 30 n(n^2-1) \lambda - n^2(n^2-1)(n^2-4) = 0 .$$
Solving this equation gives us: $$ \lambda_1 = \textstyle\frac{1}{12}n(n^2-1) + \sqrt{\textstyle\frac{1}{240}n^2(n^2-1)(3n^2-7)}$$
and $$ \lambda_2 = \textstyle\frac{1}{12}n(n^2-1) - \sqrt{\textstyle\frac{1}{240}n^2(n^2-1)(3n^2-7)}.$$
Finally, the inequality $\lambda_1>\lambda_2 > \lambda_3$ is straightforward to check.
\end{proof}

Since $A_n$ has two negative eigenvalues it is clearly not completely positive. Still, the aim of this work is to develop factorizations of matrices $A_n$ and $A_n+\alpha I_n$ that take into account not only the nonnegativity of $A_n$, but also as many of the following properties of $A_n$ as possible: symmetry, integer entries, pattern, and low rank of $A_n$. We won't be able to attend to all these properties with a single factorization, and we will allow translations of $A_n$ to make it positive semidefinite.  

\section{Straightforward factorizations related to $A_n$}

First we offer a factorization of $A_n$, that is reminiscent of a completely positive factorization, but has an additional factor with a negative element. 

\begin{theorem}\label{thm:LRL}
We can write $A_n=L_nRL_n^T$, where $L_n$ is a non-negative $n \times 3$ matrix and
$$R = \npmatrix{ 0 & 1 & 1 \\ 1 & -6 & 1 \\ 1 & 1 & 0 }.$$
\end{theorem}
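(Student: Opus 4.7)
The plan is to exhibit an explicit non-negative $L_n$ and then verify $L_n R L_n^T = A_n$ by a short polynomial calculation. Two structural observations point towards the right $L_n$. First, since $\det R = 8 \neq 0$ and $\operatorname{rank} A_n = 3$, every column of $L_n$ must lie in the column space of $A_n$; from the identity $(j-i)^2 = i^2 - 2ij + j^2$ this space is $\operatorname{span}(e, s, t)$ with $e = (1, \ldots, 1)^T$, $s_i = i$, $t_i = i^2$, so each column of $L_n$ is a quadratic polynomial in the row index. Second, writing $L_n = [\,e \,|\, s \,|\, t\,]\, M$ for a $3 \times 3$ real matrix $M$ reduces the factorization to the matrix congruence $M R M^T = S$, where $S_{13} = S_{31} = 1$, $S_{22} = -2$ and all other entries of $S$ are zero (reflecting the decomposition $A_n = e t^T + t e^T - 2 s s^T$). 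Both $R$ and $S$ are nondegenerate of signature $(1,2)$, so a real $M$ exists; the real challenge is to pick $M$ so that $L_n$ is nonnegative on $\{1, \ldots, n\}$.

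I would guess such an $M$ by choosing the first and third rows to be simple $R$-isotropic vectors, say $(1,0,0)$ and $\tfrac{1}{2}(1,1,1)$, which are isotropic because $R_{11} = 0$ and because the entries of $R$ sum to zero; the orthogonality conditions then force the middle row to be $\tfrac{1}{2}(3,1,-1)$ up to sign. Unwinding $L_n = [\,e \,|\, s \,|\, t\,]\,M$ yields the clean formula
\[
(L_n)_{i,1} = T_{i+1},\quad (L_n)_{i,2} = T_i,\quad (L_n)_{i,3} = T_{i-1}, \qquad i = 1, \ldots, n,
\]
where $T_k := k(k+1)/2$ is the $k$-th triangular number. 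These entries are nonnegative integers for $i \geq 1$, so $L_n$ is in fact a nonnegative integer matrix.

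For the verification I would compute $R \ell_j^T$, where $\ell_i$ denotes the $i$-th row of $L_n$, using the elementary identities $T_k + T_{k-1} = k^2$, $T_{k+1} + T_k = (k+1)^2$, and $T_{k+1} - 6 T_k + T_{k-1} = 1 - 2k^2 - 2k$, obtaining $R \ell_j^T = \bigl(j^2,\; 1 - 2j^2 - 2j,\; (j+1)^2\bigr)^T$. Dotting against $\ell_i$ and applying the analogous identities in $i$ together with $T_{i+1} + T_{i-1} - 2 T_i = 1$ and $T_{i-1} - T_i = -i$ collapses the expression to $i^2 - 2ij + j^2 = (j - i)^2 = (A_n)_{ij}$, as required. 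The main obstacle is locating the triangular-number $L_n$ in the first place: the family of $M$ satisfying $M R M^T = S$ is one-dimensional, and most members give columns that violate nonnegativity for large $n$; once the correct ansatz is in hand, the verification is a routine polynomial identity.
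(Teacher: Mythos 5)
Your proof is correct, and it takes a genuinely different route from the paper. You parametrize the factor as $L_n = [\,e\,|\,s\,|\,t\,]\,M$ using the explicit rank-$3$ decomposition $A_n = et^T + te^T - 2ss^T$, reduce the problem to the $3\times 3$ congruence $MRM^T = S$, and then solve it by hand using the $R$-isotropic vectors $(1,0,0)$ and $\tfrac12(1,1,1)$, landing on the triangular-number formula $\ell_i = (T_{i+1}, T_i, T_{i-1})$; the closing verification via the identities $T_k + T_{k-1}=k^2$, $T_{k+1}-2T_k+T_{k-1}=1$, $T_{i-1}-T_i=-i$ is clean and checks out. The paper instead starts from the null space of $A_n$: the third-difference vectors $v(j)$ are the first $n-3$ columns of $(I_n - J_n^T)^3$, so $(I_n-J_n)^3 A_n (I_n-J_n^T)^3 = 0_{n-3}\oplus R$, and $L_n$ is read off as the last three columns of $(I_n - J_n)^{-3}$, whose nonnegativity is automatic since $(I_n-J_n)^{-1}$ is the all-ones upper-triangular matrix. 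The two constructions produce the same matrix up to persymmetry ($K_n L_n K_3$), i.e.\ binomial coefficients $\binom{n+2-i}{2}$ versus triangular numbers $T_{i+1}=\binom{i+2}{2}$. What each buys: the paper's route explains \emph{why} a nonnegative $L_n$ exists (the null space of $A_n$ is killed by a unipotent, hence nonnegatively invertible, difference operator), and it generalizes to any matrix annihilated by repeated differences; your route is more elementary and self-contained, requiring only the quadratic-polynomial structure of the entries, but it relies on an inspired ansatz for $M$ with no a priori guarantee of nonnegativity. One small inaccuracy worth flagging: the set of $M$ with $MRM^T = S$ is a coset of the $3$-dimensional isometry group $O(R)$, not a one-parameter family; this does not affect the argument, since you only need to exhibit one good $M$.
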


\begin{proof}
Notice that the vectors $v(j)$, $j=1,\ldots,n-3$, defined in Lemma \ref{lem:eigenvalues} are the first $n-3$ columns of the matrix $(I_n-J_n^T)^{3}$.
 It follows that first $n-3$ columns of the matrix 
$A_n(I_n-J_n^T)^{3}$ are equal to zero, and, by symmetry, the first $n-3$ rows of $\hat A_n:=(I_n-J_n)^{3}A_n(I_n-J_n^T)^{3}$ are also zero. 
Hence,  only the lower-right $3\times 3$ corner of $\hat A_n$ is non-zero, and it is straightforward to check that this corner is equal to 
$$R = \npmatrix{ 0 & 1 & 1 \\ 1 & -6 & 1 \\ 1 & 1 & 0 }.$$

It follows that $A_n=(I_n-J_n)^{-3}(0_{n-3}\oplus R)(I_n-J_n^T)^{-3}$, and $L_n$ is equal to the last three columns of $(I_n-J_n)^{-3}$. The claim is proved by noting that $(I_n-J_n)^{-1} = I_n + J_n + J_n^2 + ... + J_n^{n-1}$ is a non-negative matrix. 
\end{proof}

An interesting feature of the factorization in Theorem \ref{thm:LRL} is that the matrix $R$ is independent of $n$. Since we cannot hope to find a completely positive factorization of $A_n$, we focus on the factorizations of matrices of the form $$A_n+g(n) I_n.$$ Clearly, $A_n+g(n) I_n$ will become diagonally dominant, and hence completely positive, for all large enough $g(n)$. The minimal $g(n)$ for which $A_n$ is diagonally dominant is easily computed, and is equal to
$g_D(n):=\sum_{j=1}^{n-1} j^2=\frac{1}{6}n(n-1)(2n-1).$ On the other hand, the minimal $g(n)$ for which $A_n+g(n) I_n$ will become positive semidefinite can be deduced from Lemma \ref{lem:eigenvalues}:  $f(n)= \frac{1}{6}n(n^2-1)$. The minimal $g(n)$ for which $A_n+g(n)I_n$ is completely positive therefore satisfies the following inequality: 
\begin{equation}\label{eq:bound1}
\frac{1}{6}n(n-1)(n+1) \le g(n) \le \frac{1}{6}n(n-1)(2n-1).
\end{equation}
The main result of this paper will show that the lower bound is achieved, but first we will improve the upper bound in \eqref{eq:bound1} using an inductive approach. 
Informed by \eqref{eq:bound1} we denote $f(n)=\frac{1}{6}n(n-1)(n+1)$, and look for functions $g(n)$ of the form $g(n)=q f(n)$, for which we can prove that $A_n+g(n)I_n$ is completely positive using a straightforward inductive proof. 

Note that $A_{n-1}=A_n(1)=A_n(n)$, and the most natural inductive approach to the problem is to write   
$$A_n + g(n)I_n=(A_{n-1}+g(n-1)I_{n-1}) \oplus 0_1 +R_n,$$
where $v_{n-1}=\npmatrix{(n-1)^2 & (n-2)^2 & \ldots & 1}^T$ and $$R_n=\npmatrix{(g(n)-g(n-1)) I_{n-1} & v_{n-1} \\ v_{n-1}^T & g(n)}.$$
If we already know that $A_{n-1}+g(n-1)I_{n-1}$ is completely positive by inductive hypothesis, then it is sufficient to prove that $R_n$ is completely positive to determine the complete positivity of $A_n$. Since the graph of $R_n$ contains no odd cycles of order greater than $3$, the matrix $R_n$ is completely positive as soon as it is doubly non-negative. Note that to determine that $R_n$ is positive semidefinite we only need to show that $\det R_n >0$ from consecutive minors result, see for example \cite[Theorem 7.2.5]{MR2978290}. Inserting $g(n)=q f(n)$ into 
 $$\det R_n=\left(g(n)-g(n-1)\right)^{n-2}\left(g(n)(g(n)-g(n-1))-v_{n-1}^Tv_{n-1}\right),$$
 and collecting $(g(n)(g(n)-g(n-1))-v_{n-1}^Tv_{n-1})$ in terms of $n$, gives us $q= 2
   \sqrt{\frac{3}{5}}$ as the optimal choice for $q$ for this approach. 
  
Next we consider an induction step that takes us from $n-2$ to $n$. Note that $A_{n-2}=A_n(\{1,2\})=A_n(\{n-1,n\})=A_n(\{1,n\})$, and it turns out that developing our induction process from $A_{n-2}=A_n(\{1,n\})$ gives the best bound. 
Hence, we write 
 $$A_n + g(n)I=0_1 \oplus \left( A_{n-2}+g(n-2)I_{n-2}\right) \oplus 0_1 + Q_n,$$
where 
$$Q_n = \npmatrix{g(n) & u_{n-2}^T & (n-1)^2 \\ u_{n-2} & (g(n)-g(n-2))I_{n-2} & v_{n-2} \\ (n-1)^2 & v_{n-2}^T & g(n) },$$
$u_{n-2} = \npmatrix{ 1 & 2^2 & ... & (n-2)^2}^T,$ and  $v_{n-2} =\npmatrix{ (n-2)^2 & ... & 2^2 & 1}^T$.
As above, it is sufficient to consider the complete positivity of $Q_n$. The graph of $Q_n$ contains no odd cycles of order greater than $3$, so $Q_n$ will be completely positive as soon as it will be 
positive semidefinite. To study the positive definiteness of $Q_n$, we consider a matrix that is permutationally similar to $Q_n$:
 $$\hat Q_n=\npmatrix{ (g(n)-g(n-2))I_{n-2} & u_{n-2} & v_{n-2} \\ 
                                       u_{n-2}^T & g(n) & (n-1)^2 \\
                                       v_{n-2}^T & (n-1)^2 & g(n)}.$$
Assuming $g(n) >g(n-2)$, $\hat Q_n$ will be positive semidefinite if and only if $\det \hat Q_n(n) >0$ and $\det \hat Q_n\geq 0$. Using Schur complement to compute those determinants, and considering the leading coefficient in $n$, we determine that $\det \hat Q_n(n) >0$ for all $q > \sqrt{\frac{6}{5}}$, and that $\det \hat Q_n>0$ if and only if 
 $$\det\npmatrix{\delta_n g(n)-u_{n-2}^Tu_{n-2} & \delta_n(n-1)^2-v_{n-2}^Tu_{n-2} \\ \delta_n(n-1)^2-v_{n-2}^Tu_{n-2} & \delta_n g(n)-v_{n-2}^Tv_{n-2}}>0,$$
where $\delta_n=(g(n)-g(n-2))$. Taking into account:
\begin{align*}
u_{n-2}^Tu_{n-2} =v_{n-2}^Tv_{n-2}&=\frac{1}{30}  (n-1)(n-2) (2 n-3) \left(3 n^2-9 n+5\right),  \\
v_{n-2}^Tu_{n-2}&=\frac{1}{30} n(n-1)(n-2) \left(n^2-2 n+2 \right),
\end{align*}
and $g(n)=q f(n)$, we determine that $q= \sqrt{\frac{7}{5}}$ is optimal for this method. This gives us the following proposition that strengthens the bound given in  \eqref{eq:bound1}. 

\begin{proposition}\label{prop:bound2}
The matrix $A_n+\sqrt{\frac{7}{5}} f(n)I_n$ is completely positive. 
\end{proposition}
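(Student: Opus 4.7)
The plan is to proceed by induction on $n$, exploiting the decomposition set up in the paragraphs preceding the statement. Fix $q=\sqrt{7/5}$, so that $g(n)=qf(n)=\frac{q}{6}n(n-1)(n+1)$. For the base cases $n=1,2$ one sees directly that $A_1+g(1)I_1=[0]$ and $A_2+g(2)I_2=\left(\begin{smallmatrix}q&1\\ 1&q\end{smallmatrix}\right)$ are completely positive. For $n\ge 3$, assuming by induction that $A_{n-2}+g(n-2)I_{n-2}$ is completely positive, any cp-factorization of it extends by zero rows and columns to a cp-factorization of $0_1\oplus(A_{n-2}+g(n-2)I_{n-2})\oplus 0_1$. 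Hence in the decomposition
\[A_n+g(n)I_n=0_1\oplus(A_{n-2}+g(n-2)I_{n-2})\oplus 0_1+Q_n,\]
it suffices to prove that $Q_n$ is completely positive.

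To handle $Q_n$, I would invoke the Kogan--Berman theorem recalled in the introduction. In the graph $G(Q_n)$ the vertices $1$ and $n$ are adjacent to each other and to every middle vertex, while vertices $2,\ldots,n-1$ are pairwise non-adjacent since the central block of $Q_n$ is diagonal; every cycle in $G(Q_n)$ is therefore a triangle, so $G(Q_n)$ contains no odd cycle of length greater than $3$. Consequently $Q_n$ is completely positive as soon as it is doubly non-negative, and since non-negativity of $Q_n$ is obvious, the task reduces to showing that $Q_n$, equivalently the permutationally similar $\hat Q_n$, is positive semidefinite.

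For the semidefiniteness I would carry out the Schur complement reduction already sketched in the excerpt: pivoting on the positive definite leading block $q(n-1)^2 I_{n-2}$ of $\hat Q_n$ reduces the problem to the $2\times 2$ symmetric matrix
\[M_n=\npmatrix{\delta_n g(n)-u_{n-2}^Tu_{n-2} & \delta_n(n-1)^2-v_{n-2}^Tu_{n-2} \\ \delta_n(n-1)^2-v_{n-2}^Tu_{n-2} & \delta_n g(n)-v_{n-2}^Tv_{n-2}},\]
with $\delta_n=q(n-1)^2$. Because $u_{n-2}^Tu_{n-2}=v_{n-2}^Tv_{n-2}$, the matrix $M_n$ has the form $\bigl(\begin{smallmatrix}a_n&b_n\\ b_n&a_n\end{smallmatrix}\bigr)$ and is positive semidefinite precisely when $a_n\ge|b_n|$.

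The main obstacle is therefore the explicit verification that $a_n\ge |b_n|$ for every $n\ge 3$ when $q=\sqrt{7/5}$. Inserting the closed-form expressions for $u_{n-2}^Tu_{n-2}$, $v_{n-2}^Tu_{n-2}$ and $g(n)$ turns $a_n\pm b_n$ into explicit polynomials in $n$. The value $q^2=7/5$ is exactly the one that kills the $n^5$ coefficient of $a_n+b_n$ (this is what the optimality discussion preceding the statement is recording), so after this cancellation $a_n+b_n$ becomes a polynomial of degree at most $4$ with positive leading coefficient $\tfrac{6}{5}+q$, while $a_n-b_n$ retains a manifestly positive quintic leading term. Both polynomials are then non-negative on integers once $n$ is large enough, and the residual small values of $n$ are handled by direct substitution. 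Combined with the Kogan--Berman reduction and the inductive hypothesis, this completes the argument.
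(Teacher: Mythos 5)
Your proposal is correct and follows essentially the same route as the paper: the same decomposition $A_n+g(n)I_n = 0_1\oplus(A_{n-2}+g(n-2)I_{n-2})\oplus 0_1 + Q_n$, the same Kogan--Berman reduction of complete positivity of $Q_n$ to double non-negativity, and the same Schur-complement reduction of $\hat Q_n\succeq 0$ to the symmetric $2\times 2$ matrix $M_n$, with $q=\sqrt{7/5}$ chosen precisely to kill the quintic term of $a_n+b_n$. (One small wording slip: $G(Q_n)$ does contain $4$-cycles, not only triangles, but as $4$ is even your conclusion that there is no odd cycle of length greater than $3$ still stands.)
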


\section{ Optimal $g(n)$ }\label{sec:optimal}

At this point we would like to strengthen Proposition \ref{prop:bound2} to $q=1$. To this end we denote $B_n=A_n+f(n)I_n$. Note that $\det B_n=0$, hence we are operating at the boundary of positive semidefinite matrices. The following simple and well known result points out an additional property that the factors in the cp factorization of a semidefinite matrix must satisfy. 

\begin{proposition}\label{prop:nullspace}
Let $A=BB^T$ be a completely positive matrix with a nontrivial kernel $\mathcal N$. Then $B^Tv=0$ for every $v \in \mathcal N$.
\end{proposition}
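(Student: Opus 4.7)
The plan is to use the standard trick that for a positive semidefinite Gram-style factorization, being annihilated by the matrix is equivalent to being annihilated by the transposed factor. Concretely, fix any $v \in \mathcal N$, so $Av = 0$. Then
\[
0 = v^T A v = v^T B B^T v = \|B^T v\|^2,
\]
where $\|\cdot\|$ denotes the standard Euclidean norm. Since a Euclidean norm vanishes only at the zero vector, we conclude $B^T v = 0$, which is exactly the claim.

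The key step is recognising that $A = BB^T$ gives the quadratic form $v^T A v$ as a sum of squares of entries of $B^T v$, so the kernel of $A$ coincides with the kernel of $B^T$. No additional assumptions on $B$ beyond nonnegativity (which is irrelevant here) are needed; the argument works for any real factorization $A = BB^T$.

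There is essentially no obstacle: the proof is a one-line computation using positive semidefiniteness. The only thing to note is that the claim does not require $B$ to have any particular number of columns, nor does it require the cp-rank to be attained; it applies to every factorization $A = BB^T$ with $B$ entrywise nonnegative (or indeed any real $B$). This will be useful later because it forces every column of $B$, viewed as a vector in $\mathbb R^n$, to lie in the orthogonal complement of $\mathcal N$, cutting down the search space for a cp-factorization of $B_n = A_n + f(n) I_n$ whose kernel was identified via Lemma~\ref{lem:eigenvalues}.
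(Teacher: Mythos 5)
Your proof is correct and is the standard one-line argument; the paper states Proposition~\ref{prop:nullspace} as ``simple and well known'' and omits the proof entirely, so there is nothing to compare against, but your computation $0 = v^T A v = \|B^T v\|^2$ is exactly the argument the authors have in mind.
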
 

The proof of the main result in this section will be rather technical, as we will explicitly develop a completely positive factorization of $B_n$. We will aim to write $B_n$ as the sum of rank one completely positive matrices, and a matrix with particular structure that is made explicit in the following lemma. 

\begin{lemma}\label{special}
Let $D_1 \in M_m(\R)$ and $D_2 \in M_k(\R)$ be diagonal matrices, and let $C$ be an $m \times k$ non-negative matrix. Then any matrix of the form
$$ A =  \npmatrix{D_1 & C \\ C^T & D_2} \in M_{m+k}(\R),$$
with an eigenvector $w=\npmatrix{w_1^T & -w_2^T}^T$ corresponding to a non-negative eigenvalue, where $w_1 \in M_{m}(\R)$ and $w_2 \in M_{k}(\R)$ are strictly positive, is completely positive. 
\end {lemma}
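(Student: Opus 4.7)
The plan is to invoke the Kogan--Berman graph criterion from the introduction. Since the only possibly nonzero off-diagonal entries of $A$ lie in the $C$ and $C^T$ blocks, the graph $G(A)$ is a subgraph of the complete bipartite graph $K_{m,k}$, so it is bipartite and in particular contains no odd cycle. It therefore suffices to show that $A$ is doubly non-negative.

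For entrywise non-negativity I would read off the two block rows of $Aw=\lambda w$ to get $D_1 w_1=\lambda w_1+Cw_2$ and $D_2 w_2=C^T w_1+\lambda w_2$. Both right-hand sides are non-negative vectors, and $w_1,w_2$ are strictly positive, so each diagonal entry of $D_1$ and $D_2$ is non-negative; combined with $C\ge 0$ this gives $A\ge 0$.

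For positive semidefiniteness I would perform the sign change $S=I_m\oplus(-I_k)$, which satisfies $S^2=I_{m+k}$ and yields $SAS=\npmatrix{D_1 & -C\\ -C^T & D_2}$. This matrix has non-positive off-diagonal entries, and its diagonal (inherited from $A$) is non-negative, while $Sw=\npmatrix{w_1^T & w_2^T}^T$ is strictly positive with $SAS(Sw)=\lambda Sw$. Choosing any $t\ge\max_i(SAS)_{ii}$, the matrix $N:=tI_{m+k}-SAS$ is entrywise non-negative and satisfies $N(Sw)=(t-\lambda)(Sw)$. The key input is the classical Perron--Frobenius fact that a non-negative matrix with a strictly positive eigenvector must have that eigenvalue equal to its spectral radius, so $t-\lambda=\rho(N)$. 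Since $N$ is real symmetric, its eigenvalues are real and lie in $[-\rho(N),\rho(N)]$, so those of $SAS=tI-N$ lie in $[t-\rho(N),t+\rho(N)]=[\lambda,t+\rho(N)]\subseteq[0,\infty)$, and $SAS$, hence $A$, is positive semidefinite.

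The main obstacle to watch is that the Perron step does not require irreducibility of $N$; this is handled by pairing with any non-negative left Perron eigenvector $u\ne 0$ of $N^T$, since the identity $(t-\lambda)\,u^T Sw=u^T N(Sw)=\rho(N)\,u^T Sw$ together with $Sw>0$ and $u\ge 0$, $u\ne 0$ forces $u^T Sw>0$ and hence $t-\lambda=\rho(N)$.
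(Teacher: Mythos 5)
Your proof is correct and follows essentially the same route as the paper: conjugate by $I_m\oplus(-I_k)$ to obtain a symmetric $Z$-matrix with a positive eigenvector for a non-negative eigenvalue, conclude it is positive semidefinite, and then use the bipartite-graph criterion for complete positivity. Where the paper simply cites the relevant $M$-matrix characterization, you re-derive it from Perron--Frobenius (correctly handling reducibility via a left Perron vector), so yours is a more self-contained rendering of the same argument.
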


\begin{proof}
First we observe that the matrix $B=(I_m \oplus -I_k)A(I_m \oplus -I_k)$ has nonpositive off-diagonal elements, and a positive eigenvector $\hat w=\npmatrix{w_1^T & w_2^T}^T$ corresponding to a non-negative eigenvalue. We deduce that $B\hat w\geq 0$, and hence $B$ is an $M$-matrix, see for example \cite{MR1298430}. Since $B$ is also symmetric, it is positive semidefinite. This, in particular, proves that $D_1$ and $D_2$ are non-negative. Since $A$ is similar to $B$, we now know that $A$ is doubly non-negative. Moreover, the graph of $A$ is bipartite, hence $A$ is completely positive, as it is proved in \cite{MR923678}. 
\end{proof}

Proposition \ref{prop:nullspace} and Lemma \ref{special} bring to light, why it is convenient to know the singular vector of $B_n$ explicitly when we are developing the completely positive factorization of $B_n$. 
In what follows let us denote the eigenvector corresponding to the eigenvalue $\lambda_3$ given in Lemma \ref{lem:eigenvalues} as $w(n)$, or just $w$, when $n$ is clear from the context. (Clearly, $w(n)$ is a singular vector for $B_n$.) Part of the proof will be split into an even and an odd case. One of the reasons for this is the slight difference in the pattern of $w(n)$ in those cases as $w(2m)=\npmatrix{v_{2m},-v_{2m}'}^T$ and $w(2m+1)=\npmatrix{v_{2m+1},0,v_{2m+1}'}^T$, where $v_n=\npmatrix{n-1 & n-3 & \ldots & n-(2 \lfloor \frac{n}{2}\rfloor-1) }^T$ and $v_n'=\npmatrix{ n-(2 \lfloor \frac{n}{2}\rfloor-1)& \ldots & n-3 & n-1 }^T$.

\begin{lemma}\label{lem:antidiag}
Let $K_n \in M_n(\R)$ be the matrix with ones on the anti-diagonal and zeros elsewhere. Then $K_n^2=I_n$, $K_nA_nK_n=A_n$ and $K_nv_n=v_n'$. 
\end{lemma}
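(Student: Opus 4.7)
The plan is to verify each of the three identities directly from the defining formulas for $K_n$, $A_n$, $v_n$, and $v_n'$, with no induction or structural argument needed; the statement is essentially a bookkeeping lemma recording the symmetry of $A_n$ under order-reversal of the index set.

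First I would handle $K_n^2 = I_n$ by recognizing $K_n$ as the permutation matrix of the involution $\sigma(i) = n+1-i$. Writing $(K_n)_{ij} = \delta_{i+j,\,n+1}$ and computing
\[
(K_n^2)_{ij} = \sum_{k=1}^n (K_n)_{ik}(K_n)_{kj} = \sum_{k=1}^n \delta_{i+k,\,n+1}\,\delta_{k+j,\,n+1},
\]
the only surviving term is $k = n+1-i$, forcing $j = i$, which gives $(K_n^2)_{ij} = \delta_{ij}$.

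Next, for $K_n A_n K_n = A_n$, I would compute entries. Left-multiplication by $K_n$ reverses the row order and right-multiplication reverses the column order, so $(K_n A_n K_n)_{ij} = (A_n)_{n+1-i,\,n+1-j}$. Since $(A_n)_{rs} = (s-r)^2$, this equals $\bigl((n+1-j) - (n+1-i)\bigr)^2 = (i-j)^2 = (A_n)_{ij}$, proving the identity.

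Finally, for $K_n v_n = v_n'$, the action of $K_n$ on a column vector is to reverse the order of its entries: $(K_n v_n)_i = (v_n)_{n+1-i}$. Comparing with the definitions
\[
v_n = \npmatrix{n-1 \\ n-3 \\ \vdots \\ n-(2\lfloor n/2\rfloor - 1)}, \qquad v_n' = \npmatrix{n-(2\lfloor n/2\rfloor - 1) \\ \vdots \\ n-3 \\ n-1},
\]
one sees that $v_n'$ is exactly $v_n$ read in reverse, so $K_n v_n = v_n'$. I do not anticipate any real obstacle here, since each part is an immediate consequence of the definitions; the point of isolating the lemma is to have these symmetries at hand for the case analysis on parity of $n$ that follows.
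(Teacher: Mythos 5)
Your proof is correct. The paper states Lemma~\ref{lem:antidiag} without any proof, treating all three identities as immediate, so the direct entrywise verification you give is exactly what is intended and there is no alternative argument to compare against. One small caveat concerning the third identity: as defined in the paper, $v_n$ has $\lfloor n/2\rfloor$ entries rather than $n$, so $K_n v_n$ is, read literally, a dimension mismatch, and so is your formula $(K_n v_n)_i = (v_n)_{n+1-i}$. What is actually used downstream (in the proof of Lemma~\ref{even}, where $n = 2m$) is $K_m v_n = v_n'$; your reversal argument gives this verbatim once $n$ is replaced by $m = \lfloor n/2\rfloor$ in that formula. The slip originates in the paper's own statement and is not a defect in your reasoning.
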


\begin{lemma}\label{even}
Let $n=2m$. There exists a non-negative matrix $U$ satisfying $U^Tw(n)=0$, diagonal matrices $D, D' \in M_m(\R)$ and a non-negative matrix $C \in M_m(\R)$, such that
$$ B_n = UU^T + \npmatrix{D & C \\ C^T & D'}.$$
\end {lemma}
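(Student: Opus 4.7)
The plan is to exploit the antidiagonal symmetry of the singular vector $w(n)$ to build $U$ column by column in such a way that $U^T w(n)=0$ is automatic. Since $w_{2m+1-k}=-w_k$, any column supported on a union of antidiagonally symmetric pairs $\{p,\,2m+1-p\}$ with equal values on each such pair is orthogonal to $w(n)$, and I would restrict $U$ to consist only of such columns.

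For each pair $(i,j)$ with $1\le i<j\le m$ I plan to include one or more columns whose first-half support contains both $i$ and $j$. The simplest such choice is a single four-sparse column $u = a(e_i+e_{2m+1-i}) + b(e_j+e_{2m+1-j})$ with $ab = (j-i)^2$. By antidiagonal symmetry this column contributes $ab$ to the $(i,j)$-entry of the upper-left block and the same amount to the $(2m+1-j,\,2m+1-i)$-entry of the lower-right block, and since its first-half support is exactly $\{i,j\}$ it contributes to no other off-diagonal entry of the two diagonal blocks. Thus setting the totals correctly for each pair forces the off-diagonal parts of the two diagonal blocks of $UU^T$ to agree with those of $A_n$, so that $B_n-UU^T$ takes the required block form $\npmatrix{D & C \\ C^T & D'}$ with $D,D'$ diagonal.

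The main technical step is to choose the parameters so that every off-diagonal block entry of $UU^T$ stays at or below the corresponding entry of $A_n$, guaranteeing $C\ge 0$. For the four-sparse columns above these constraints reduce, by collecting the contributions at each ``antidiagonal'' off-diagonal block entry $(k,\,2m+1-k)$, to $\sum_{j>k}a_{kj}^2+\sum_{i<k}b_{ik}^2 \le (2m+1-2k)^2$ for $k=1,\dots,m$; and a direct comparison between the constraints at $(m,m+1)$ (bound $1$) and at $(1,2m)$ (bound $(2m-1)^2$) shows that, together with $a_{ij}b_{ij}=(j-i)^2$, this system is feasible precisely when $m\le 3$. The main obstacle for $m\ge 4$ is therefore to enrich the family of columns---for instance by including antidiagonally symmetric columns with supports of size six or eight, so that a single column can contribute to several pairs $(i,j)$ at once and thereby supplies enough degrees of freedom---in order to exhibit an explicit choice of parameters, depending on the entries of $w(n)$, that satisfies all the inequalities simultaneously. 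Once $U$ has been constructed, the matrices $D$, $D'$ and $C$ are read off directly from $B_n-UU^T$.
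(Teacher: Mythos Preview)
Your proposal has a genuine gap, and in fact the framework you set up cannot be repaired in the way you suggest.

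You correctly observe that the four-sparse antidiagonally symmetric columns fail for $m\ge 4$, but the proposed fix---enriching the family with antidiagonally symmetric columns of larger support---is still doomed. The obstruction is structural: if every column $u$ of $U$ satisfies $K_n u=u$, then $u_m=u_{m+1}$ for each column, so $(UU^T)_{m,m}=(UU^T)_{m,m+1}$. The entry $(m,m+1)$ lies in the off-diagonal block at position $(m,1)$ of $H_m$, where $(H_m)_{m,1}=1$; thus $C\ge 0$ forces $(UU^T)_{m,m}\le 1$. On the other hand the $(1,m)$ entry of the upper-left block must equal $(m-1)^2$, so by Cauchy--Schwarz $(m-1)^4\le (UU^T)_{1,1}\,(UU^T)_{m,m}\le (UU^T)_{1,1}$. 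But again by $K_n$-symmetry $(UU^T)_{1,1}=(UU^T)_{1,2m}$, and $C\ge 0$ forces $(UU^T)_{1,2m}\le (2m-1)^2$. Hence $(m-1)^2\le 2m-1$, which fails for $m\ge 4$. No choice of support sizes can evade this, because the argument uses only the global constraint $K_n u=u$.

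The paper's construction escapes this trap precisely by dropping the requirement that individual columns be antidiagonally symmetric. It takes three-sparse columns $u_{ij}=(j-i)(e_i+e_j+\alpha_{ij}e_{m+k})$ with two entries in the upper half and a single entry in the lower half, where $k$ is chosen carefully and $\alpha_{ij}$ is fixed by the single scalar condition $u_{ij}^T w(n)=0$. The overall $K_n$-symmetry is then recovered at the level of $UU^T$ by adjoining, for each $u_{ij}$, its reflection $K_n u_{ij}$ as a separate column. Because $u_{ij}$ and $K_n u_{ij}$ are distinct columns rather than a single symmetric one, the diagonal entries of $UU^T$ are decoupled from the antidiagonal of the off-diagonal block, and the Cauchy--Schwarz obstruction above disappears. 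The bulk of the paper's proof is then a careful case analysis, over four regions of the index set, verifying that with the specific choice $k=\lfloor\frac{m+j}{2}\rfloor+1-i$ the resulting off-diagonal block $C=H_m-S-S'$ is entrywise non-negative. Your proposal contains no analogue of this verification, which is the actual content of the lemma.
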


\begin{proof} 
As suggested by the result we will view $B_n$ as a $2 \times 2$ block matrix: 
$$ B_n = \npmatrix{ A_m + f(n)I_m & H_m \\ H_m^T & A_m + f(n)I_m},$$
where $H_m=(h_{ij}) \in M_m(\R)$ is defined by
$h_{ij}=(m+j-i)^2$. We will construct $U$ by finding two matrices $V$ and $V'$ satisfying 
 $UU^T=VV^T+V'V'^T$,
 where each column of $V$ corresponds to an off-diagonal pair of entries in the upper-left block of $B_n$ and each column of $V'$ corresponds to an off-diagonal pair of entries in the lower-right block of $B_n$. 

Explicitly, $V:=\sum_{i=1}^{m-1}\sum_{j=i+1}^{m} u_{ij} u_{ij}^T,$
where 
\begin{align*}
u_{ij} &:= (j-i) (e_i + e_j + \alpha_{ij} e_{m+k}), \,
 k:= \textstyle \lfloor\frac{m+j}{2}\rfloor + 1 - i, \text{ and }\\ 
\alpha_{ij} &:= \textstyle \frac{2}{2k-1}(2m-i-j+1).
\end{align*}
Note that the matrix $u_{ij} u_{ij}^T$ has exactly nine nonzero entries, three on the main diagonal, one at the position $(i,j)$, two in the upper-right block and symmetrically three more below the main diagonal. The entry in $(i,j)$ position is equal to $(i-j)^2$, and hence equal to the $(i,j)$-th entry of $B_n$. Furthermore, once $k$ is chosen, the value of $\alpha_{ij}$ is determined from the condition $u_{ij}^Tw(n)=0$. (While there is some choice in what $k$ can be, we need to make sure that we choose it in such a way that the off-diagonal blocks remain non-negative after subtraction from $B_n$.) Thus the matrix $VV^T$ has the form
$$ VV^T = \npmatrix{A_m + E_1 & S \\ S^T & E_2 },$$
where $E_1$ and $E_2$ are diagonal matrices, and satisfies the condition $V^Tw(n)=0$. 

We define $V':=K_nV$. Using block partition $$K_n=\npmatrix{0_m & K_m \\ K_m & 0_m}$$ and Lemma \ref{lem:antidiag} it is easy to determine that
$$V'V'^T= \npmatrix{E_1' & S' \\ S'^T & A_m+E_2' } \text{ and } V'^Tw(n)=0,$$
where $E_1'=K_mE_2K_m$, $E_2'=K_mE_1K_m$ and $S'=K_mS^TK_m$. In particular, 
$S'_{xy}=S_{m+1-y,m+1-x}.$


To complete the proof, we need to show that the matrix $C=H_m-S-S'$ is non-negative. 
 Let us first consider the $(x, y)$-entry in $S$: $s_{xy}=\sum_{1 \leq i < j \leq m}(u_{ij}u_{ij}^T)_{x,m+y}$.
 The matrix $u_{ij}u_{ij}^T$ has a nonzero entry at $(x, m+y)$ if and only if $(x, y) = (i, k)$ or $(x, y) = (j, k)$.
In the first case we have $i=x$ and $y = \lfloor\frac{m+j}{2}\rfloor + 1 - i$, thus $j= 2(x+y-1)-m$ or $j= 2(x+y-1)+1-m$.
In the second case we have $j=x$ and $i = \lfloor\frac{m+x}{2}\rfloor + 1 - y$. So, at most three 
columns of matrix $VV^T$ contribute to the entry $s_{xy}$. We will refer to this contributions as matrices $S_I$, $S_{II}$ and $S_{III}$, whose nonzero entries are defined as follows: 
\begin{align*}
(S_I)_{xy}&=(u_{x,2(x+y-1)-m}u_{x,2(x+y-1)-m}^T)_{x,y+m} \text{ for }1 \le x < 2(x+y-1)-m \le m \\
(S_{II})_{xy}&=(u_{x,2(x+y-1)+1-m}u_{x,2(x+y-1)+1-m}^T)_{x,y+m} \text{ for }1 \le x < 2(x+y-1)+1-m \le m \\
(S_{III})_{xy}&=(u_{\lfloor\frac{m+x}{2}\rfloor+1-y,x}u_{\lfloor\frac{m+x}{2}\rfloor+1-y,x}^T)_{x,y+m} \text{ for }1 \le \lfloor\frac{m+x}{2}\rfloor+1-y < x \le m.
\end{align*}
%
The computation of relevant entries of $u_{ij}u_{ij}^T$ and reordering of the bounds, gives us the following formulas for the nonzero entries of $S_{I}, S_{II}$ and $S_{III}$:
\begin{align*}
(S_I)_{xy} &= \textstyle
\frac{2}{2y-1}(x+2y-m-2)^2(3m-3x-2y+3) \text{ for }  m+3-2y \le x \le m+1-y, \\
(S_{II})_{xy} &= \textstyle\frac{2}{2y-1}(x+2y-m-2)^2(3m-3x-2y+2) \text{ for }  m+2-2y \le x \le m-y, \\ 
(S_{III})_{xy} &=\textstyle\frac{2}{2y-1}(x-\lfloor\frac{m+x}{2}\rfloor+y-1)^2(2m-\lfloor\frac{m+x}{2}\rfloor+y-x) \\
&\text{ for } \max\{m+3-2y,2y-m-1\} \le x \le m.
\end{align*}
Note that outside the indicated regions the entries of those matrices are equal to zero.
To simplify the calculations we estimate the matrix $S_{III}$ by a matrix $\hat S_{III}$ with nonzero entries in positions $(x,y)$ satisfying $m+3-2y \le x \le m$ and defined by: 
$$(S_{III})_{xy} \le (\hat S_{III})_{xy} = \textstyle
\frac{2}{2y-1}(x-\frac{m+x-1}{2}+y-1)^2(2m-\frac{m+x-1}{2}+y-x).$$

Since the matrices $H_m$ and $S + S'$ are symmetric with respect to the counter-diagonal it is enough to prove the nonnegativity of elements $(H_m-S-S')_{xy}$ that satisfy $x+y \le m+1$. Inside this region we will consider four subregions, that are determined by the location of nonzero entries of matrices $S_I$, $S_{II}$, $S_{III}$, $S'_I$, $S'_{II}$ and $S'_{III}$. Before we define the regions, we observe that $(S_{II})_{xy}=0$ for $x+y\geq m+1$, hence $(S'_{II})_{xy}=0$ for $x+y\leq m+1$ and $S'_{II}$ doesn't need to be considered. Similarly, $(S_I)_{xy}=0$ for $x+y \ge m+2$, so $(S'_{I})_{xy}$ only contribution is when $x+y=m+1$. At this point we also note that $(S)_{m1}=(S')_{m1}=0$. Now let us define and study each of the four regions:
\begin{enumerate}[{Region} (1):] 

\item $\{(x,y);\, x \le m+1-2y\}$, only $(\hat S'_{III})_{xy} \neq 0$, hence $C_{xy}=(H_m-S'_{III})_{xy} \ge (H_m-\hat S'_{III})_{xy}$.  Introducing new variables $u=y-1\ge 0$ and $v=m+1-x-2y\ge0$ gives us:
$$ C_{xy} \ge \frac{3 + 55 u + 129 u^2 + 81 u^3 + 2 v + 52 u v + 66 u^2 v + 12 u v^2}{4 (3 + 4 u + 2 v)} > 0.$$

\item $\{(x,y);\, x = m+2-2y, y \ge 2)\}$, $(S_{II})_{xy} \neq 0$ and $(\hat S'_{III})_{xy} \neq 0$, hence $C_{xy} \ge (H_m-S_{II}-\hat S'_{III})_{xy}$, which we express in terms of $u=y-2\ge 0$. We get
$$ C_{xy} \ge \frac{320 + 1184 u + 1558 u^2 + 855 u^3 + 162 u^4}{4 (3 + 2 u) (5 + 4 u)} > 0.$$

\item $\{(x,y);\, m+3-2y \le x \le m-y \}$, $(S_{I})_{xy} \neq 0$, $(S_{II})_{xy} \neq 0$, $(\hat S_{III})_{xy} \neq 0$, $(\hat S'_{III})_{xy} \neq 0$, and $C_{xy} \ge (H_m-S_I-S_{II}-\hat S_{III}-\hat S'_{III})_{xy}$.
 Change of variables $u=x+2y-m-3\ge 0$ and $v=m-x-y\ge0$ gives us
$$ C_{xy} \ge \frac{p(u,v)}{4 (5 + 2 u + 2 v) (7 + 2 u + 4 v)} > 0,$$
where
\begin{align*}
p(u,v) &= 1261 + 1487 u + 601 u^2 + 122 u^3 + 12 u^4 + 3629 v + 3306 u v \,+ \\
       &+\, 849 u^2 v + 72 u^3 v + 3565 v^2 + 2351 u v^2 + 312 u^2 v^2 + 1371 v^3 \,+ \\
       &+\, 516 u v^3 + 162 v^4.
\end{align*}

\item $\{(x,y); \, x=m+1-y, y \geq 2 \}$, $(S_{I})_{xy}=(S'_{I})_{xy} \neq 0$, $(\hat S_{III})_{xy}=(\hat S'_{III})_{xy} \neq 0$, and $C_{xy}\ge (H_m-2S_I-2 \hat S_{III})_{xy}$, 
which we express in terms of $u=y-2\ge 0$ to get
$$ C_{xy} \ge \frac{6 + 16 u + 12 u^2 + 3 u^3}{2 (3 + 2 u)} > 0.$$
\end{enumerate}

\end{proof}

\begin{lemma}\label{odd}
Let $n=2m+1$. There exists a non-negative matrix $U$ satisfying $U^Tw(n)=0$, $\alpha \ge 0$, diagonal matrices 
$D, D' \in M_m(\R)$ and a non-negative matrix $C \in M_m(\R)$, such that
$$ B_n = UU^T + \npmatrix{D & 0 & C \\ 0 & \alpha & 0 \\ C^T & 0 & D'}.$$
\end {lemma}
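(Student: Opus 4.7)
The plan is to mirror the proof of Lemma~\ref{even}, with additional care for the middle row and column (index $m+1$). Write $B_{2m+1}$ as a $3 \times 3$ block matrix with block sizes $m, 1, m$: both outer diagonal $m \times m$ blocks are $A_m + f(n)I_m$; the $(1,3)$ block $\tilde H_m$ has $(i,j)$-entry $(m+1+j-i)^2$; the middle column above the middle diagonal is $z_m = ((m+1-i)^2)_{i=1}^m$, with the middle row to its right being $K_m z_m$ by symmetry. The decisive enabling fact is that $w(n)_{m+1}=0$, so the constraint $U^T w(n)=0$ places no restriction on the middle coordinate of any column of $U$.

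I would first build a family $V_1$ analogous to the matrix $V$ from the even case: for each pair $1 \le i < j \le m$, introduce a column
$$u_{ij} = (j-i)\,(e_i + e_j + \alpha_{ij}\,e_{m+1+k}),$$
with $k = k_{ij}$ chosen to deposit the ``spillover'' in a controlled position of the $(1,3)$ block, and $\alpha_{ij} = (2m+2-i-j)/k$ determined by the orthogonality condition $u_{ij}^T w(n)=0$. These columns cancel the off-diagonal entries of the upper-left $A_m$ block and contribute to the upper-right block. Next, to handle the middle column, I add a second family $V_2$ of columns $u_p = \lambda_p(e_p + \mu_p e_{m+1} + e_{2m+2-p})$ for $p=1,\dots,m$; the identity $w_p + w_{2m+2-p}=0$ makes $u_p^T w(n)=0$ automatic (the middle coordinate is unconstrained because $w_{m+1}=0$), while the choice $\lambda_p^2\mu_p = (m+1-p)^2$ makes $u_p u_p^T$ exactly cancel the $(p,m+1)$-entry and, by the $K_n$-invariance of $u_p$, simultaneously cancel the $(m+1,2m+2-p)$-entry. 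Finally, I set $V' = K_n V_1$; by Lemma~\ref{lem:antidiag}, $K_n w(n) = -w(n)$, so $V'^T w(n)=0$ and the columns of $V'$ cancel the off-diagonal entries of the lower-right $A_m$ block, with additional symmetric spillover into the $(1,3)$ block. Note that the $K_n$-invariance of each $u_p$ means the $V_2$ columns should not be duplicated in any $K_n$-symmetric copy.

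Assembling $U = [\,V_1,\, V_2,\, V'\,]$, the residual $B_n - UU^T$ will by construction have zero off-diagonal entries within each outer block, zero in the middle row and column away from the centre, and a residual $(1,3)$ block of the form $C = \tilde H_m - S - S' - T$, where $S, S'$ are the $(1,3)$-block contributions of $V_1 V_1^T$ and $V' V'^T$, and $T$ is the anti-diagonal matrix whose $(p,m+1-p)$-entry is $\lambda_p^2 = (m+1-p)^2/\mu_p$ coming from $u_p u_p^T$. The parameters $\mu_p$ must be chosen large enough so that the middle diagonal contribution $\sum_p \lambda_p^2 \mu_p^2 \le f(n)$, leaving $\alpha \ge 0$, and so that the diagonal contributions in the outer blocks (coming from $V_1, V_2, V'$ together) leave $D, D' \ge 0$; since those combined sums scale cubically in $m$ while $f(n)$ scales like $m^3$ with a much larger constant, these diagonal conditions pose no serious obstruction.

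The main obstacle, as in the even case, is the entrywise verification that $C \ge 0$. This requires an appropriate choice of $k_{ij}$ for $u_{ij}$ and of $\mu_p$ for $u_p$, followed by a region-by-region polynomial positivity analysis analogous to the four-region breakdown in the proof of Lemma~\ref{even}. Since the entries of $\tilde H_m$ are somewhat larger than those of the even-case matrix $H_m$, the non-negativity of $C$ should still be achievable, but the bookkeeping for the extra anti-diagonal contributions of $V_2$ and the interaction of its ``overshoot'' into the $(p,2m+2-p)$ positions of the $(1,3)$ block is the new technical hurdle.
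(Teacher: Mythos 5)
Your high-level plan matches the paper's: view $B_n$ as a $3\times 3$ block matrix, build a family $V_1$ of rank-one pieces that clear the off-diagonal entries of the upper-left $A_m$ block with spillover into the $(1,3)$ block, use $V' = K_n V_1$ for the lower-right block, use a separate family $V_2$ (the paper's $Z$) exploiting $w(n)_{m+1}=0$ to clear the middle row/column, and reduce the claim to $\alpha\ge 0$ and the entrywise nonnegativity of the residual $C$. This is the right approach, and you have identified the key structural features. However, there are two genuine gaps.

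\emph{The pair $(m-1,m)$ cannot simply be included in $V_1$ and mirrored.} If $k_{m-1,m}$ follows the natural formula $k = \lfloor (m+j+1)/2\rfloor -i$, one gets $k=1$, $\alpha_{m-1,m}=3$, and $u_{m-1,m}=e_{m-1}+e_m+3e_{m+2}$, with $K_n u_{m-1,m}=3e_m+e_{m+2}+e_{m+3}$. Both rank-one pieces hit position $(m,m+2)$, each contributing $3$, and $z_m z_m^T$ contributes another $\lambda_m^2 = 1/\mu_m>0$ there; but $B_n[m,m+2]=4$, so $C_{m,1}<0$ no matter how $\mu_m$ is chosen. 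The paper avoids this by running $V$ only over $1\le i\le m-2$ and handling the leftover pair $(m-1,m)$ together with its $K_n$-mirror via a single $K_n$-symmetric rank-one term $t t^T$, $t=e_{m-1}+e_m+e_{m+2}+e_{m+3}$. Your proposal leaves $k_{ij}$ and $\mu_p$ unspecified, so this collision is not addressed. (A minor related point: the free parameters $\mu_p$ buy nothing and complicate matters; the paper fixes $\mu_p=1$, under which $\beta=\sum_p(m+1-p)^2 = \frac16 m(m+1)(2m+1)<\frac23 m(m+1)(2m+1)=f(n)$ directly gives $\alpha>0$. Also the direction of your tuning is reversed: since $\lambda_p^2\mu_p^2=(m+1-p)^2\mu_p$, taking $\mu_p$ large makes the middle diagonal contribution larger, not smaller.)

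\emph{The central verification is missing.} The last paragraph of your proposal describes the proof of $C\ge 0$ as ``the new technical hurdle'' rather than carrying it out. That verification is the substance of the lemma: the paper computes the contributions $S_I,S_{II},S_{III}$ and their $K_n$-mirror images explicitly, bounds $S_{III}$ above to remove the floor function, and then checks four regions by exhibiting explicit non-negative polynomial lower bounds (with two boundary cases in Region (3) checked by hand). Until that entrywise analysis is actually done --- with a definite choice of $k_{ij}$ and of how the pair $(m-1,m)$ is handled --- the claim is not proved.
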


%
%
%

\begin{proof}
The proof will go along the same lines as the proof of Lemma \ref{even}. This time we will view $B_n$ as a $3 \times 3$ block matrix: 
$$ B_n = \npmatrix{ A_m + f(n)I_m & b_m & H_m \\ b_m^T & f(n) & (b'_m)^T \\  H_m^T & b'_m & A_m + f(n)I_m },$$
where $b_{m}=\npmatrix{m^2 & (m-1)^2 & \ldots & 1}^T$, $b'_m=K_mb_m$ and $H_m=(h_{ij}) \in M_m(\R)$ is defined by
$h_{ij}=(m+1+j-i)^2$. We will write $U$ as: 
 $$UU^T=VV^T+V'V'^T+tt^T+ZZ^T,$$
where (as in the proof of Lemma \ref{even}) each column of $V$ will correspond to an off-diagonal pair of entries in the upper-left block of $B_n$, and each column of $V'$ will correspond to an off-diagonal pair of entries in the lower-right block of $B_n$. In this case we will need to consider separately the position $(m-1, m)$ in the upper-left block, and  the position $(m+2, m+3)$ in the lower-right block. To deal with those positions we introduce a rank one matrix $tt^T$. Finally,  each column of $Z$ will correspond to an entry in the upper-middle block of $B_n$.

As in the even case, we define
$V:=\sum_{i=1}^{m-2}\sum_{j=i+1}^{m} u_{ij} u_{ij}^T,$
where 
\begin{align*}
u_{ij} &:= (j-i) (e_i + e_j + \alpha_{ij} e_{m+1+k}), \,
 k:= \textstyle \lfloor\frac{m+j+1}{2}\rfloor - i, \\ 
\alpha_{ij} &:= \textstyle \frac{1}{k}(2m-i-j+2),
\end{align*}
and $V':=K_nV$.
There are two positions in the diagonal blocks left to cover, and this is done by $tt^T$, where:
$t := e_{m-1}+e_m+e_{m+2}+e_{m+3}.$ It is easy to verify that $V^Tw(n)=0$, $V'^Tw(n)=0$, $t^Tw(n)=0$, and
$$VV^T+V'V'^T+tt^T= \npmatrix{A_m + \hat E_1 & 0 & \hat S \\ 0 & 0 & 0 \\ \hat S & 0 & A_m+\hat E_2 },$$
where $\hat E_1$ and $\hat E_2$ are diagonal matrices. Finally, we define 
$Z:=\sum_{i=1}^m z_i z_i^T,$ where
$$z_i := (m+1-i) (e_i + e_{m+1} + e_{2m+2-i}).$$
We have $z_i^Tw_n=0$, and 
$$ ZZ^T = \npmatrix{F & b_m & FK_m \\ b_m^T & \beta & (b'_m)^T  \\ K_mF & b'_m & K_mFK_m },$$
where $F$ is a diagonal matrix.

To complete the proof, we need to show that $\alpha:= f(n)- \beta > 0$, and that the matrix $C:=H_m-\hat S-FK_m$ is non-negative. 
We have 
$$\beta = \sum_{i=1}^m i^2 = \frac{1}{6}m(m+1)(2m+1) < f(n) = \frac{2}{3}m(m+1)(2m+1),$$
proving $\alpha >0$.

The matrix $tt^T$ influences only the lower-left $2 \times 2$ corner of $C$. For $n=5$ we have
$$C= \npmatrix{8 & 11 \\ 2 & 8},$$ and for $n \ge 7$ the lower-left $2 \times 2$ corner of $C$ is equal to $$\npmatrix{0 & 6 \\ 2 & 0}\ge 0.$$

The $(x,y)$ entry of the matrix $FK_m$ is nonzero only when $x+y=m+1$, and in this case $(FK_m)_{xy} = y^2$.

We will consider $VV^T$ and $V'V'^T$ separately, and to this end we denote by $S$ the top-right corner of $VV^T$ and by $S'=K_mS^TK_m$ the top-right corner of $V'V'^T$. Contributions of $S$ and $S'$ are dealt with as in Lemma \ref{even}, with only minor changes in indexing. 
Considering nonzero entries of the matrix $u_{ij}u_{ij}^T$ at position $(x, m+1+y)$, we get $(x, y) = (i, k)$ or $(x, y) = (j, k)$. This results in three possibilities, as follows.  
First we can have $i=x$ and $j= 2x+2y-1-m$, and we define the corresponding matrix $S_I$ by 
$$(S_I)_{xy}=(u_{x,2x+2y-1-m}u_{x,2x+2y-1-m}^T)_{x,m+1+y}$$ for $1 \le x < 2x+2y-1-m \le m, x\le m-2,$
and zero for $(x, y)$ outside this  range. Similarly, for  $i=x$ and  $j= 2x+2y-m$ we have $S_{II}$ with nonzero entries  
$$(S_{II})_{xy}=(u_{x,2x+2y-m}u_{x,2x+2y-m}^T)_{x,m+1+y}$$ for $1 \le x < 2x+2y-m \le m, x\le m-2$, and for  $j=x$ and $i = \lfloor\frac{m+x+1}{2}\rfloor - y$ we define $S_{III}$ with nonzero entries 
$$(S_{III})_{xy}=(u_{\lfloor\frac{m+x+1}{2}\rfloor-y,x}u_{\lfloor\frac{m+x+1}{2}\rfloor-y,x}^T)_{x,m+1+y}$$  for $ 1 \le \lfloor\frac{m+x+1}{2}\rfloor-y < x \le m, \lfloor\frac{m+x+1}{2}\rfloor-y \le m-2$.


Computation of relevant entries of $u_{ij}u_{ij}^T$ and reordering of the bounds, gives us the following formulas for the nonzero entries of $S_{I}, S_{II}$ and $S_{III}$:
\begin{align*}
(S_I)_{xy} &= \textstyle
\frac{1}{y}(x+2y-m-1)^2(3m-3x-2y+3) \text{ for }  m+2-2y \le x \le m-y, \\
(S_{II})_{xy} &= \textstyle\frac{1}{y}(x+2y-m)^2(3m-3x-2y+2) \text{ for }  m+1-2y \le x \le m-y, y \ge 2, \\ 
(S_{III})_{xy} &=\textstyle\frac{1}{y}(x-\lfloor\frac{m+x+1}{2}\rfloor+y)^2(2m-\lfloor\frac{m+x+1}{2}\rfloor+y-x+2) \\
&\text{ for } \max\{m+2-2y,2y-m\} \le x \le m, (x, y)\ne(m, 1).
\end{align*}
We estimate the matrix $S_{III}$ by a matrix $\hat S_{III}$, $\hat S_{III}\geq S_{III}$, with nonzero entries in positions $(x,y)$ satisfying $m+2-2y \le x \le m, (x, y)\ne(m, 1)$ and defined by: 
$$(S_{III})_{xy} \le (\hat S_{III})_{xy} = \textstyle
\frac{1}{y}(x-\frac{m+x}{2}+y)^2(2m-\frac{m+x}{2}+y-x+2).$$

The matrices $H_m$, $S + S'$ and $FK_m$ are symmetric with respect to the counter-diagonal, so it is enough to prove the nonnegativity of elements $(H_m-FK_m-S-S')_{xy}$ that satisfy $x+y \le m+1$. Inside this region we will consider four subregions, that are determined by the location of nonzero entries of matrices $S_I$, $S_{II}$, $S_{III}$, $S'_I$, $S'_{II}$, $S'_{III}$ and $FK_m$. Since $(S'_{I})_{xy}=(S'_{II})_{xy}=0$ for $x+y\leq m+1$, matrices $S'_{I}$ and $S'_{II}$ don't need to be considered. The four regions are defined as follows:
\begin{enumerate}[{Region} (1):] 
\item $\{(x,y);\, x \le m-2y\}$, only $(S'_{III})_{xy} \neq 0$, hence $C_{xy}=(H_m-S'_{III})_{xy} \ge (H_m-\hat S'_{III})_{xy}$.  Introducing new variables $u=y-1\ge 0$ and $v=m-x-2y\ge0$ gives us:
$$ C_{xy} \ge \frac{24 + 220 u + 258 u^2 + 81 u^3 + 8 v + 104 u v + 66 u^2 v + 12 u v^2}{8 (3 + 2 u + v)} > 0.$$
\item $\{(x,y);\, x = m+1-2y, y \ge 2\}$, $(S_{II})_{xy} \neq 0$ and $(S'_{III})_{xy} \neq 0$, hence $C_{xy} \ge (H_m-S_{II}-\hat S'_{III})_{xy}$, which we express in terms of $u=y-2\ge 0$. We get
$$ C_{xy} \ge \frac{305 + 691 u + 435 u^2 + 81 u^3}{16 (2 + u)} > 0.$$
\item $\{(x,y);\, m+2-2y \le x \le m-y\}$, $(S_{I})_{xy} \neq 0$, $(S_{II})_{xy} \neq 0$, $(S_{III})_{xy} \neq 0$, $(S'_{III})_{xy} \neq 0$, and $C_{xy} \ge (H_m-S_I-S_{II}-\hat S_{III}-\hat S'_{III})_{xy}$.
 Change of variables $u=x+2y-m-2\ge 0$ and $v=m-x-y\ge0$ gives us
$$ C_{xy} \ge \frac{p(u,v)}{8 (2 + u + v) (3 + u + 2 v)} ,$$
where
\begin{align*}
p(u,v) &= -122 - 71 u + 25 u^2 + 30 u^3 + 6 u^4 + 361 v + 568 u v + 241 u^2 v \,+ \\
       &+\, 36 u^3 v + 909 v^2 + 825 u v^2 + 156 u^2 v^2 + 531 v^3 + 258 u v^3 +  81 v^4.
\end{align*}
We get $C_{xy} > 0$ unless $(u, v) \in \{ (0, 0), (1, 0)\}$. For $(u, v) \in \{ (0, 0), (1, 0)\}$ we get $(x, y) \in \{(m-2,2), (m-3, 3)\}$. In those two cases we compute $C_{m-2,2} = 3 >0$ and $C_{m-3,3} = \frac{11}{4} >0$.
\item $\{(x,y); \, x=m+1-y, y \geq 3 \}$, $(FK_m)_{xy} \neq 0$, $(S_{III})_{xy}=(S'_{III})_{xy} \neq 0$, and $C_{xy}\ge (H_m-FK_m-2 \hat S_{III})_{xy}$, 
which we express in terms of $u=y-3\ge 0$ to get
$$ C_{xy} \ge \frac{68 + 116 u + 52 u^2 + 7 u^3}{4 (3 + u)} > 0.$$
\end{enumerate}

\end{proof}

\begin{theorem}\label{optimal}
The matrix $B_n = A_n + f(n)I$ is completely positive for every $n$.
\end {theorem}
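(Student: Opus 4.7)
The plan is to assemble the theorem as an almost immediate consequence of Lemmas \ref{even}, \ref{odd}, and \ref{special}, using the null space information from Lemma \ref{lem:eigenvalues}. The heavy combinatorial estimates have already been carried out inside Lemmas \ref{even} and \ref{odd}; what remains is to pivot between those lemmas and Lemma \ref{special} via the singular vector $w(n)$.

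First I would record that Lemma \ref{lem:eigenvalues} yields $A_n w(n) = -f(n) w(n)$, hence $B_n w(n) = 0$, so $w(n)$ spans a known kernel direction of $B_n$. The sign pattern noted before Lemma \ref{lem:antidiag} is what makes Lemma \ref{special} applicable: for $n=2m$ one has $w(n) = \npmatrix{v_{2m} \\ -v_{2m}'}$ with both blocks strictly positive, and for $n=2m+1$ one has $w(n) = \npmatrix{v_{2m+1} \\ 0 \\ -v_{2m+1}'}$ with strictly positive outer blocks and a zero middle entry.

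For even $n=2m$ I would apply Lemma \ref{even} to obtain $B_n = UU^T + M$ with $M = \npmatrix{D & C \\ C^T & D'}$. Because the lemma was engineered so that $U^T w(n) = 0$, we get $Mw(n) = B_n w(n) - UU^T w(n) = 0$, so $w(n)$ is an eigenvector of $M$ with eigenvalue $0$. The sign pattern of $w(n)$ now matches the hypothesis of Lemma \ref{special}, which forces $M$ to be completely positive; adding the trivially completely positive $UU^T$ finishes the even case. For odd $n=2m+1$, Lemma \ref{odd} writes $B_n = UU^T + M$ with $M = \npmatrix{D & 0 & C \\ 0 & \alpha & 0 \\ C^T & 0 & D'}$ and $\alpha \ge 0$. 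Up to a symmetric permutation, $M$ is the direct sum of the scalar $\alpha$ (trivially completely positive) and $M' = \npmatrix{D & C \\ C^T & D'}$. Since the middle entry of $w(n)$ is zero and $U^T w(n) = 0$, the equation $Mw(n)=0$ restricts on the outer block to $M' \npmatrix{v_{2m+1} \\ -v_{2m+1}'} = 0$, and Lemma \ref{special} once more supplies complete positivity of $M'$, hence of $M$ and of $B_n$. The few boundary values of $n$ that formally fall outside the index ranges of Lemmas \ref{even}--\ref{odd} (essentially $n \le 3$) are handled by direct inspection.

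The unifying conceptual point, in the spirit of Proposition \ref{prop:nullspace}, is that Lemmas \ref{even} and \ref{odd} have been designed so that every rank-one summand used to construct $UU^T$ already annihilates $w(n)$; the residual $M$ thus inherits $w(n)$ as a null vector for free, and the bipartite block structure together with the sign pattern of $w(n)$ is precisely what Lemma \ref{special} needs. Consequently there is no genuine obstacle at this stage beyond invoking the previous lemmas, which is why the proof of Theorem \ref{optimal} itself is short despite the technical weight of the lemmas it rests on.
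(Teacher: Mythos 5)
Your proposal is correct and follows essentially the same route as the paper: decompose $B_n$ via Lemma \ref{even} (even $n$) or Lemma \ref{odd} (odd $n$), then apply Lemma \ref{special} to the residual block matrix, splitting off the scalar $\alpha$ in the odd case. You spell out a step the paper leaves implicit --- deriving $Mw(n)=0$ from $B_n w(n)=0$ together with $U^Tw(n)=0$, and observing that the sign pattern of $w(n)$ is exactly what Lemma \ref{special} requires --- which is a useful clarification but not a different argument.
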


\begin{proof}
Suppose first that $n=2m$ is even. By Lemma \ref{even} we write $B_n$ as 
$$B_n = UU^T + \npmatrix{D & C \\ C^T & D'},$$
and since the matrix $\npmatrix{D & C \\ C^T & D'}$ satisfies conditions of Lemma \ref{special}, the proof is complete in this case. 

Suppose now that $n=2m+1$. By Lemma \ref{odd} we can write $B_n$ as 
$$B_n = UU^T + \npmatrix{D & 0 & C \\ 0 & \alpha & 0 \\ C^T & 0 & D'}.$$
Now $\npmatrix{D & C \\ C^T & D'}$ is completely positive by Lemma \ref{special}, since its eigenvector corresponding to $0$ is just $w(2m+1)$ with the middle entry (which is equal to zero) omitted. In addition, we have proved that $\alpha > 0$, and this allows us to conclude that
$$\npmatrix{D & 0 & C \\ 0 & \alpha & 0 \\ C^T & 0 & D'}$$ is completely positive.
\end{proof}

\section{ Integer completely positive factorization }\label{sec:integer}

In this section we want to take into account that $A_n$ has integer entries, and we ask the question, for what values of $g(n)$ does the matrix $A_n + g(n)I$ have an integer completely positive factorization. It turns out, that for small $n$,  $g(n)=f(n)$ works, but for $n\geq 6$ we need to choose $g(n) >f(n)$.

\begin{proposition}\label{prop:smalln}
Let $B_n:=A_n+f(n)I_n$. Then:
\begin{enumerate}
\item $B_n$ has an integer completely positive factorization for $n \le 5$.
\item$B_6$ does not have an integer completely positive factorization.
\item $B_6+I_6$ has an integer completely positive factorization.
\end{enumerate}
\end {proposition}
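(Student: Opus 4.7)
The plan is to treat the three claims separately; the main effort is concentrated in claim~(2).

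For (1), I would exhibit explicit nonnegative integer factorizations for each $n\le 5$. Since $B_n$ is singular with null space spanned by $w(n)$, Proposition~\ref{prop:nullspace} forces every column of any cp-factor to be orthogonal to $w(n)$, and the diagonal bound $\sum_i b_{i,j}^2 = f(n)$ caps each entry at $\lfloor\sqrt{f(n)}\rfloor$. For $n\le 5$ this leaves a very short list of candidate columns, and a short greedy search (first taking care of the ``tight'' off-diagonal entries $B_n[i,i+1]=1$, which each force a single column with ones in positions $i$ and $i+1$) suffices. For instance, one verifies the factorization
$$B_4 = \mathbf{1}\mathbf{1}^T + (1,0,3,0)^T(1,0,3,0) + (0,3,0,1)^T(0,3,0,1) + 2\,(2,0,0,2)^T(2,0,0,2),$$
where $\mathbf{1}=(1,1,1,1)^T$, and analogous explicit constructions handle $n=1,2,3,5$.

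For (3), the matrix $B_6+I_6$ is strictly positive definite, so no orthogonality constraint is imposed on the columns of an integer cp-factor, and each column entry is bounded by $\sqrt{36}=6$. This added freedom allows a construction along the same lines as~(1): one identifies columns (for instance, $(1,1,1,1,1,1)^T$ absorbs all five consecutive-pair entries $B_6[i,i+1]=1$ simultaneously) that handle the tight off-diagonal constraints, subtracts their rank-one contributions, and fills in the residual using diagonal-heavy columns enabled by the extra slack on the diagonal.

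For (2), the plan is a finite but careful case analysis. Every column $b$ of any integer cp-factor of $B_6$ must lie in the finite set
$$\mathcal{C}=\{\,b\in\{0,1,2,3,4,5\}^6 \; : \; 5b_1+3b_2+b_3 = b_4+3b_5+5b_6\,\}.$$
The entries $B_6[i,i+1]=1$ for $i=1,\ldots,5$ are rigid: each forces exactly one column in the factorization to satisfy $b_i=b_{i+1}=1$, while every other column has $b_i b_{i+1}=0$. Enumerating which vectors in $\mathcal{C}$ can host several such adjacent $1$'s, and then checking whether their squares can match the diagonal value $35$ while also reproducing the longer-range entries such as $B_6[1,6]=25$ and $B_6[1,5]=16$, is expected to yield the required contradiction. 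The hardest step will be making this bookkeeping tight and conclusive; a modular reduction (for example, noting that $35\equiv 3\pmod 4$ forces the number of odd entries of each column to be $\equiv 3\pmod 4$) may help streamline the argument, but in the absence of a single clean invariant, executing this case analysis in full is the principal obstacle of the proof.
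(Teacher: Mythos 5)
Your overall strategy coincides with the paper's: use Proposition~\ref{prop:nullspace} to constrain columns, use the rigidity coming from the superdiagonal entries $(B_n)_{i,i+1}=1$ (each forces exactly one column with consecutive ones), and exhibit explicit integer factors for the positive cases. Your factorization of $B_4$ is correct and, up to splitting $2\,(2,0,0,2)^T(2,0,0,2)$ into two equal columns, is the paper's factorization, so parts (1) and (3) would go through once the remaining explicit factors for $n=2,3,5$ and for $B_6+I_6$ are actually written down --- these are elementary to produce along the lines you describe, so I view (1) and (3) as essentially complete modulo bookkeeping.

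Part (2), however, is not a proof: you have only set up the search space $\mathcal{C}$ and described what kind of bookkeeping would be needed, and you explicitly concede that ``executing this case analysis in full is the principal obstacle.'' The modular observation you float (parity of entries forced by $35\equiv 3\pmod 4$) is a row constraint across columns, not a per-column invariant, and does not by itself produce a contradiction. What is missing is precisely the reduction that makes the enumeration finite and small. The paper's proof of (2) shows how: starting from the unique column $u_1$ with $(u_1)_1=(u_1)_2=1$, orthogonality to $w=(5,3,1,-1,-3,-5)^T$ together with the consecutive-ones rule pins $u_1$ to a very short list of candidates (the paper gives $v=(1,1,1,1,1,1)^T$, $v'=(1,1,0,0,1,1)^T$, $v''=(1,1,0,3,0,1)^T$); one then subtracts the forced rank-one pieces, observes that the residual has a zero superdiagonal (so every remaining column alternates), and derives a contradiction from the entries at positions $(1,3)$, $(3,5)$, $(2,6)$ and orthogonality. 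That concrete chain of deductions is what closes the argument, and without it your sketch of (2) remains a plan rather than a proof.
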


\begin{proof}
To prove the first item and the third item, we find integer completely positive factorizations explicitly, as follows: 
\begin{align*}
 B_2 &= \npmatrix{ 1 & 1 \\ 1 & 1 } = \npmatrix{ 1 \\ 1 } \npmatrix{ 1 & 1 }, 
 \end{align*}
 \begin{align*}
B_3 &= \npmatrix{ 4 & 1 & 4 \\ 1 & 4 & 1 \\ 4 & 1 & 4 } = 
\npmatrix{ 1 \\ 1 \\ 1 } \npmatrix{ 1 & 1 & 1 } + 
3 \npmatrix{ 1 & 0 \\ 0 & 1 \\ 1 & 0 } \npmatrix{ 1 & 0 & 1 \\ 0 & 1 & 0} ,\\
\end{align*}
 \begin{align*}
B_4 &= \npmatrix{ 10 & 1 & 4 & 9 \\ 1 & 10 & 1 & 4 \\ 4 & 1 & 10 & 1 \\ 9 & 4 & 1 & 10 } = 
\npmatrix{ 1 & 1 & 0 \\ 1 & 0 & 3 \\ 1 & 3 & 0 \\ 1 & 0 & 1 } 
\npmatrix{ 1 & 1 & 1 & 1 \\ 1 & 0 & 3 & 0 \\ 0 & 3 & 0 & 1 } + 
8 \npmatrix{ 1 \\ 0 \\ 0 \\ 1 } \npmatrix{ 1 & 0 & 0 & 1 } , \\
\end{align*}
 \begin{align*}
B_5 &= \npmatrix{ 
20 & 1 & 4 & 9 & 16 \\ 1 & 20 & 1 & 4 & 9 \\ 4 & 1 & 20 & 1 & 4 \\ 9 & 4 & 1 & 20 & 1 \\ 16 & 9 & 4 & 1 & 20 } = 
\npmatrix{ 1 & 2 & 0 & 0 \\ 1 & 0 & 4 & 0 \\ 1 & 0 & 0 & 4 \\ 1 & 4 & 0 & 0 \\ 1 & 0 & 2 & 0 } 
\npmatrix{ 1 & 1 & 1 & 1 & 1 \\ 2 & 0 & 0 & 4 & 0 \\ 0 & 4 & 0 & 0 & 2 \\ 0 & 0 & 4 & 0 & 0 } + \\
&+ 3 \npmatrix{ 1 & 0 & 2 \\ 0 & 1 & 0 \\ 1 & 0 & 0 \\ 0 & 1 & 0 \\ 1 & 0 & 2 } 
\npmatrix{ 1 & 0 & 1 & 0 & 1 \\ 0 & 1 & 0 & 1 & 0 \\ 2 & 0 & 0 & 0 & 2 } .
\end{align*}
Furthermore: 
 \begin{align*}&B_6+I_6 =A_6+36 I_6=
 \npmatrix{ 
36 & 1 & 4 & 9 & 16 & 25 \\ 1 & 36 & 1 & 4 & 9 & 16 \\ 4 & 1 & 36 & 1 & 4 & 9 \\ 9 & 4 & 1 & 36 & 1 & 4 \\ 16 & 9 & 4 & 1 & 36 & 1 \\ 25 & 16 & 9 & 4 & 1 & 36  
} =\\
 &= \npmatrix{ 1 \\ 1 \\ 1 \\ 1 \\ 1 \\ 1 }\npmatrix{ 1 & 1 & 1 & 1 & 1 & 1 } + 
2 \npmatrix{ 1 & 0 & 0 \\ 0 & 0 & 2 \\ 0 & 4 & 0 \\ 4 & 0 & 0 \\ 0 & 0 & 2 \\ 0 & 1 & 0 }
\npmatrix{ 1 & 0 & 0 & 4 & 0 & 0 \\ 0 & 0 & 4 & 0 & 0 & 1 \\ 0 & 2 & 0 & 0 & 2 & 0 } + \\ 
 &+ 3 \npmatrix{ 1 & 0 \\ 0 & 1 \\ 1 & 0 \\ 0 & 1 \\ 1 & 0 \\ 0 & 1 } 
\npmatrix{ 1 & 0 & 1 & 0 & 1 & 0 \\ 0 & 1 & 0 & 1 & 0 & 1 } +
6 \npmatrix{ 1 & 0 & 2 \\ 0 & 2 & 0 \\ 0 & 0 & 0 \\ 0 & 0 & 0 \\ 2 & 0 & 0 \\ 0 & 1 & 2 }
\npmatrix{ 1 & 0 & 0 & 0 & 2 & 0 \\ 0 & 2 & 0 & 0 & 0 & 1 \\ 2 & 0 & 0 & 0 & 0 & 2 } .
\end{align*}

To prove the second item, we assume that  
$$B_6= \npmatrix{ 
35 & 1 & 4 & 9 & 16 & 25 \\ 1 & 35 & 1 & 4 & 9 & 16 \\ 4 & 1 & 35 & 1 & 4 & 9 \\ 9 & 4 & 1 & 35 & 1 & 4 \\ 16 & 9 & 4 & 1 & 35 & 1 \\ 25 & 16 & 9 & 4 & 1 & 35  
}$$
has an integer completely positive factorization: $B_6 = UU^T$. Let $u_i$, $i=1,\ldots,r$, denote the columns of $U$. First we note, that all columns of $U$ need to be orthogonal to
$w = \npmatrix{ 5 & 3 & 1 & -1 & -3 & -5 }^T$ by Proposition \ref{prop:nullspace}.  
Next we look at conditions that are coming from the fact that the superdiagonal of $B_6$ has all elements equal to $1$. In particular, this implies, that if two consecutive entries of any column of $U$ are nonzero, they both have to be equal to $1$.  
Moreover, since $(B_6)_{12} = 1$, one 
of the columns in $U$ has to have the first two entries both equal to one, and, without loss of generality,  we may assume that this holds for the first column of $U$: 
$u_1 = \npmatrix{ 1 & 1 & * & * & * & * }^T .$ Taking into account all the conditions on $u_1$ that we have listed so far, $u_1$ has to be equal to one of the following three vectors: 
\begin{align*}
v &= \npmatrix{ 1 & 1 & 1 & 1 & 1 & 1 }^T ,\\
v' &= \npmatrix{ 1 & 1 & 0 & 0 & 1 & 1 }^T , \\
v'' &= \npmatrix{ 1 & 1 & 0 & 3 & 0 & 1 }^T . 
\end{align*}
In the first case we have 
$$B_6 - u_1u_1^T = U_1U_1^T = \npmatrix{ 
34 & 0 & 3 & 8 & 15 & 24 \\ 0 & 34 & 0 & 3 & 8 & 15 \\ 3 & 0 & 34 & 0 & 3 & 8 \\ 8 & 3 & 0 & 34 & 0 & 3 \\ 15 & 8 & 3 & 0 & 34 & 0 \\
24 & 15 & 8 & 3 & 0 & 34
} .$$

If $u_1=v'$, then some column of $U$ (suppose $u_2$) has $1$ in the second and the third position. This forces the first entry of $u_2$ to be zero, and the condition $w^Tu_2=0$ leaves us with only one choice for $u_2$: $u_2 = \npmatrix{ 0 & 1 & 1 & 1 & 1 & 0 }^T ,$ thus 
$$B_6 - u_1u_1^T - u_2u_2^T = U_2U_2^T = \npmatrix{ 
34 & 0 & 3 & 8 & 15 & 24 \\ 0 & 33 & 0 & 2 & 7 & 15 \\ 3 & 0 & 33 & 0 & 2 & 8 \\ 8 & 2 & 0 & 33 & 0 & 3 \\ 15 & 7 & 2 & 0 & 33 & 0 \\
24 & 15 & 8 & 3 & 0 & 34
} .$$

If $u_1=v''$, we notice that some column of $U$ (suppose $u_2$) has $1$ in the last two positions, and since the first two entries of $u_2$ cannot both be equal to one, we deduce that
$u_2 = \npmatrix{ 1 & 0 & 3 & 0 & 1 & 1 }^T .$ Since $(B_6)_{23}=1$, we still need a column, say $u_3$, in $U$ that has $1$ in both the second and the third position. Again, we are left with only one option:
$u_3 = \npmatrix{ 0 & 1 & 1 & 1 & 1 & 0 }^T ,$ and
$$B_6 - u_1u_1^T - u_2u_2^T - u_3u_3^T = U_3U_3^T = \npmatrix{ 
33 & 0 & 1 & 6 & 15 & 23 \\ 0 & 33 & 0 & 0 & 8 & 15 \\ 1 & 0 & 25 & 0 & 0 & 6 \\ 6 & 0 & 0 & 25 & 0 & 1 \\ 15 & 8 & 0 & 0 & 33 & 0 \\
23 & 15 & 6 & 1 & 0 & 33
} .$$

In all three cases we are left with a matrix $U_i U_i^T$ that has all the entries on the superdiagonal equal to zero. In particular, 
the columns of $U_i$ have at least every other entry equal to zero. Moreover, one of the columns in $U_i$ has to have the first and the third
entry nonzero, so it is of the form:
$\hat v= \npmatrix{ a & 0 & b & 0 & c & d }^T ,$
where $cd=0$. 
Looking at the values of $U_iU_i^T$ in positions $(1,3)$, $(3,5)$ and $(2,6)$ we get $ab\leq 3$, $bc \leq 3$, and  $bd \leq 8$. This, together with the condition  $w^T\hat v=0$, gives us only one option for $\hat v$:
$$\hat v = \npmatrix{ 1 & 0 & 1 & 0 & 2 & 0 }^T .$$
In the third case we also need $bc=0$, so this choice immediately leads to contradiction. In the remaining two cases $U_iU_i^T - \hat v\hat v^T$ still has the $(1,3)$-entry positive, so we need to have another column in $U_i$, $i=1,2$,  of the same form. 
However, in both cases $U_iU_i^T-2 vv^T$ has a negative entry in $(3,5)$ position, leading to a contradiction. 
\end{proof}

The ad hoc approach, that we used to find completely positive factorizations in the proof above, is not well suited for generalisation to matrices of arbitrary size. Instead, we develop a more general technique, that can be applied to a class of Toeplitz matrices, but that does not produce the optimal $q$ for distance matrices. 

\begin{lemma}\label{lem:Ei}
Let $n \in \mathbb{N}$, $i \in \{1,\ldots,n-1\}$, and $$ E_i := I + J_n^i + J_n^{2i} + ... + J_n^{(n-1)i} + (J_n^T)^i + (J_n^T)^{2i} + ... + (J_n^T)^{(n-1)i} .$$
Any matrix of the form $\sum_{i=1}^{n-1} a_i E_i$, $a_i \in \mathbb{N}\cup \{0\}$, has an integer completely positive factorization. 
\end{lemma}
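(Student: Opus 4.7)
The plan is to observe that each $E_i$ itself admits a very clean, explicit integer cp-factorization into a sum of rank one $0/1$ matrices, and then use that each coefficient $a_i$ is a nonnegative integer to replicate columns rather than take square roots.

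First, I would identify the pattern of $E_i$ at the entry level. Because $J_n^k$ has a $1$ in position $(x,y)$ precisely when $y - x = k$ (and zero otherwise), the sum defining $E_i$ gives $(E_i)_{xy} = 1$ if and only if $|y-x|$ is a nonnegative multiple of $i$ that is strictly less than $n$, i.e.\ if and only if $x \equiv y \pmod{i}$. Consequently, if I partition $\{1,2,\ldots,n\}$ into the residue classes modulo $i$ and let $v_i^{(r)} \in \{0,1\}^n$ be the indicator vector of the class $\{j : j \equiv r \pmod{i}\}$ for $r = 0,1,\ldots,i-1$, then
\[
 E_i \;=\; \sum_{r=0}^{i-1} v_i^{(r)} \bigl(v_i^{(r)}\bigr)^T,
\]
since the $(x,y)$ entry of the right-hand side is $1$ iff $x$ and $y$ lie in the same residue class modulo $i$. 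This already gives an integer cp-factorization of $E_i$ with factor matrix consisting of the $i$ columns $v_i^{(0)},\ldots,v_i^{(i-1)}$.

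Next, I would handle the coefficients $a_i \in \mathbb{N}\cup\{0\}$. Instead of rescaling the columns by $\sqrt{a_i}$ (which would generally not produce integer entries), I simply repeat each column $a_i$ times. Explicitly, for every $i \in \{1,\ldots,n-1\}$ with $a_i \neq 0$ and every $r \in \{0,1,\ldots,i-1\}$, I include $a_i$ copies of the vector $v_i^{(r)}$ as columns of a single matrix $V$. Concatenating these $0/1$ columns (in any order) yields an integer matrix $V$ with
\[
 V V^T \;=\; \sum_{i=1}^{n-1} \sum_{r=0}^{i-1} a_i \, v_i^{(r)} \bigl(v_i^{(r)}\bigr)^T \;=\; \sum_{i=1}^{n-1} a_i E_i,
\]
which is the desired integer completely positive factorization.

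There is no real obstacle here beyond the initial identification of $E_i$ with the sum of outer products of residue class indicators; once that structural observation is made, the proof is a one line verification together with the observation that multiplying a rank one $0/1$ outer product by a nonnegative integer can be realised by column repetition. In particular, $V$ can be taken to have entries in $\{0,1\}$ only, which is a stronger conclusion than merely integer.
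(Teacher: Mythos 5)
Your proof is correct and is essentially the same factorization as in the paper: the paper defines $U_i^T = \npmatrix{ I_i & I_i & \cdots & I_i & D_r }$ (with $n = qi + r$), and the columns of the resulting $U_i$ are exactly your residue-class indicator vectors $v_i^{(0)},\ldots,v_i^{(i-1)}$, so $E_i = U_iU_i^T$ is the same decomposition as your $E_i = \sum_r v_i^{(r)}(v_i^{(r)})^T$. You spell out the column-repetition handling of the integer coefficients $a_i$ a bit more explicitly than the paper (which just says it suffices to treat each $E_i$), but the argument is the same.
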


\begin{proof}
Clearly, it is sufficient to prove the statement for $E_i$, $i=1,\ldots,n-1$. So,  let us choose $i \in \{1,\ldots,n-1\}$, and write
$n = qi+r$ where $r \in \{0,\ldots,i-1\}$. We define: 
$$ U_i^T = \npmatrix{ I_i & I_i & ... & I_i & D_r },$$
where $D_r$ is an $i \times r$ (possibly null) matrix with $1$'s on the main diagonal and $0$'s elsewhere. Equality $E_i=U_iU_i^T$ can be checked by a straightforward calculation. 
\end {proof}

Before we see how this lemma can be applied to distance matrices, we illustrate its application by an example.

\begin{example}
The matrix of the form $$E_1 + E_2 + \ldots + E_{n-1} = (n-1)I + \sum_{i=1}^{n-1} \tau(i)(J_n^i + (J_n^T)^i) $$
has an integer completely positive factorization, where 
$\tau(i) := \sum_{d | i} d^0$, the number of all divisors of $i$.
\end{example}


The previous example shows a straightforward application of Lemma \ref{lem:Ei}, where $a_i$ are all chosen to be one. To determine the values for $a_i$, $i=1,\ldots,n-1$, so that the matrices $A_n$ and $\sum_{i=1}^{n-1} a_i E_i$ agree outside the diagonal, we need the Jordan totient function: 
$$J_2(k) :=k^2 \prod_{\substack{p | k \\ p \text{ prime}}}(1-\frac{1}{p^2}) .$$

\begin{theorem}\label{thm:integer}
The matrix $A_n + g_J(n)I$ has an integer completely positive factorization for $g_J(n):= \sum_{k=1}^{n-1} J_2(k)$.
\end {theorem}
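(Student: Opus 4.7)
The plan is to choose the coefficients $a_i$ in Lemma \ref{lem:Ei} so that $\sum_{i=1}^{n-1} a_i E_i$ coincides with $A_n + g_J(n)I_n$, and then invoke Lemma \ref{lem:Ei} directly. First I would analyse the pattern of $E_i$: from its definition, for any fixed $i$ the matrix $E_i$ has a $1$ in position $(j,j')$ precisely when $j'-j$ is a multiple of $i$ (allowing $0$), and a $0$ otherwise. Hence, for indices with $d:=|j'-j|\geq 1$, the $(j,j')$ entry of $\sum_{i=1}^{n-1} a_i E_i$ equals $\sum_{i \mid d} a_i$, and every diagonal entry of this same sum equals $\sum_{i=1}^{n-1} a_i$.

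Next I would match off-diagonals: we want $\sum_{i \mid d} a_i = d^2$ for every $d = 1,\dots,n-1$. This is a classical divisor-sum identity that can be inverted by Möbius inversion to give $a_d = \sum_{i \mid d}\mu(d/i)\, i^2$, and this is by definition the Jordan totient $J_2(d)$. (Equivalently, one may verify directly that $\sum_{i \mid d} J_2(i) = d^2$, which follows from multiplicativity of $J_2$ and a check on prime powers.) Since $J_2(d)$ is a non-negative integer for every $d\ge 1$, the choice $a_i = J_2(i)$ is admissible for Lemma \ref{lem:Ei}.

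With this choice, the off-diagonal entries of $\sum_{i=1}^{n-1} J_2(i)\,E_i$ agree with those of $A_n$ by construction, while the diagonal entries all equal $\sum_{i=1}^{n-1} J_2(i) = g_J(n)$. Therefore
\[
A_n + g_J(n) I_n \;=\; \sum_{i=1}^{n-1} J_2(i)\, E_i,
\]
and Lemma \ref{lem:Ei} produces an explicit integer completely positive factorization of the right-hand side, concluding the proof. The only non-routine step is recognising that the Möbius inversion of $d\mapsto d^2$ is precisely $J_2$; once this is in place the rest is mechanical bookkeeping of entries of $E_i$.
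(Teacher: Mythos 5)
Your proof is correct and takes essentially the same route as the paper: both identify the decomposition $A_n + g_J(n)I_n = \sum_{i=1}^{n-1} J_2(i) E_i$ by reading off the pattern of $E_i$ and then using the divisor-sum identity $\sum_{i \mid d} J_2(i) = d^2$, followed by an appeal to Lemma \ref{lem:Ei}. The only difference is cosmetic: you derive the identity via M\"obius inversion, whereas the paper cites it as a known formula.
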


\begin{proof}
The claim will follow from Lemma \ref{lem:Ei}, after we show that:
 $$A_n + g_J(n)I_n=\sum_{i=1}^{n-1}J_2(i)E_i.$$
Clearly, the equality holds for the elements on the main diagonal. The equality for off-diagonal elements follows from the following well known formula, that can be found for example in \cite{MR1042491}: 
$$ \sum_{d \text{ divisor of } k} J_2(d) = k^2.$$
\end {proof}


We know that, $g_J(n)$ is not optimal, since, for example, $g_J(6)=48>36$, and $36$ is the optimal choice for $n=6$ by Proposition \ref{prop:smalln}. 
On the other hand $g_J(n) <g_D(n)$, where $g_D(n):=\frac{1}{6}n(n-1)(2n-1)$ is the smallest value that makes $A_n+g_D(n)I_n$ diagonally dominant.  Asymptotically we have 
$$g_J(n)\sim\displaystyle\frac{n^3}{3\zeta(3)},$$ 
where $\zeta$ is Riemann zeta function, see for example \cite{MR1042491}.
This gives us: $$g_J(n) \sim \displaystyle\frac{2}{\zeta(3)}f(n) \approx 1.66381f(n),$$
which is again an improvement over $g_D(n) \sim 2 f(n).$ 

\bibliographystyle{amsplain}
\bibliography{cp}

%
%
%

\end{document}